\documentclass[11pt]{article}

\usepackage[T1]{fontenc}
\usepackage[utf8]{inputenc}
\usepackage{lmodern}

\usepackage{graphicx}
\usepackage{amsthm, amsmath, amssymb, tikz, bm}
\usepackage{mathtools, intcalc}
\usepackage{xifthen}
\usepackage[colorlinks=true,
linkcolor=blue,citecolor=blue,
urlcolor=blue]{hyperref}
\usepackage{subcaption}

\usetikzlibrary{arrows}
\usepackage{thmtools}
\usepackage{thm-restate}

\usepackage[margin=1in]{geometry}

\usepackage[shortlabels]{enumitem}
\usepackage{todonotes}
\usetikzlibrary{calc,shapes,backgrounds,positioning,fit}
\allowdisplaybreaks

\newtheorem{theorem}{Theorem}
\newtheorem{lemma}[theorem]{Lemma}

\newtheorem{conjecture}{Conjecture}
\newtheorem{proposition}[theorem]{Proposition}

\theoremstyle{definition}

\newcommand{\E}{\operatorname{E}}

\newcommand{\cT}{\mathcal{T}}

\usepackage[normalem]{ulem}

\pgfdeclarelayer{nodelayer}
\pgfdeclarelayer{edgelayer}
\pgfsetlayers{edgelayer,nodelayer,main}
\tikzstyle{Black Vertex}=[fill=black, draw=black, shape=circle]
\tikzstyle{none}=[fill=none, draw=none, shape=circle]
\tikzstyle{Green Edge}=[-, fill=none, draw=green]
\tikzstyle{Red Edge}=[-, fill=none, draw=red]
\tikzstyle{Blue Edge}=[-, draw=blue]
\tikzstyle{Orange Edge}=[-, draw={rgb,255: red,255; green,128; blue,0}]
\tikzstyle{Outline}=[fill=white, draw=black, shape=rectangle]
\tikzstyle{Black Edge}=[-]

\title{Toughness Bounds for Fractional Hamiltonicity and Resistance Positivity}

\author{
Zhiyu Wang \thanks{Louisiana State University, Baton Rouge, LA, 70803
({\tt zhiyuw@lsu.edu}). This author was supported in part by LA Board of Regents grant LEQSF(2024-27)-RD-A-16.}
}
\date{}
\def\e{\mathbf e}

\DeclareMathOperator{\opt}{OPT}
\DeclareMathOperator{\supp}{supp}

\begin{document}

\maketitle

\begin{abstract}
A graph is fractionally Hamiltonian if it admits a nonnegative edge weighting in $[0,1]$
of total weight equal to its order such that every nontrivial edge cut has
weight at least two. Motivated by Chv\'atal's Toughness Conjecture,
Scheinerman and Ullman conjectured that every $2$-tough graph is
fractionally Hamiltonian. In this paper, we show that every connected graph on at least three vertices that is not fractionally
Hamiltonian has a non-Hamiltonian chordal spanning supergraph. Since adding edges does not decrease
toughness, a theorem of Kabela and Kaiser that every $10$-tough chordal
graph on at least three vertices is Hamiltonian yields that every $10$-tough
graph on at least three vertices is fractionally Hamiltonian.

We apply this result to resistance curvature. We prove that every
fractionally Hamiltonian graph is resistance positive (RP), and consequently
every $10$-tough graph is RP, confirming a conjecture of Devriendt. In the
other direction, for every $\varepsilon>0$, we construct a graph that is not
resistance nonnegative and has toughness greater than
$3/2-\varepsilon$, extending a recent construction of Agrahari, Bibby, Boros, Garcia, Heidercheidt, and Wang.
\end{abstract}

\vspace{-1.5em}

\section{Introduction}\label{sec:intro}

Throughout the paper, all graphs are finite and simple. Unless stated otherwise, they are connected. A graph is \emph{Hamiltonian} if it contains a cycle through all of its vertices. For a graph $G$ and a vertex set $S\subseteq V(G)$, let $c(G-S)$ denote the number of connected components of
$G-S$. Given a noncomplete graph $G$, the \emph{toughness} of $G$, denoted by $\tau(G)$, is 
\[
   \tau(G):=\min\left\{\frac{|S|}{c(G-S)}:
   S\subseteq V(G),\ c(G-S)>1\right\}.
\]
Complete graphs are defined to have infinite toughness, and $G$ is called \emph{$t$-tough}
if $\tau(G)\ge t$. Toughness was introduced by Chv\'atal as a necessary
condition for Hamiltonicity~\cite{Chvatal1973}. Every Hamiltonian graph is
$1$-tough, and Chv\'atal made the following conjecture.

\begin{conjecture}[Chv\'atal~\cite{Chvatal1973}]
\label{conj:Chvatal}
There exists a constant $t_0$ such that every $t_0$-tough graph on at least
three vertices is Hamiltonian.
\end{conjecture}

Despite extensive research, Conjecture~\ref{conj:Chvatal} remains open; see
the surveys of Bauer, Broersma, and Schmeichel
\cite{BauerBroersmaSchmeichel2006} and Broersma~\cite{Broersma2015}. 
Enomoto, Jackson, Katerinis, and Saito proved that every $2$-tough graph on
at least three vertices has a $2$-factor, while for every $\varepsilon>0$ there exists a
$(2-\varepsilon)$-tough graph without a $2$-factor
\cite{EnomotoJacksonKaterinisSaito1985}. Bauer, Broersma, and Veldman
constructed, for every $\varepsilon>0$, a $(9/4-\varepsilon)$-tough graph
with no Hamiltonian path~\cite{BauerBroersmaVeldman2000}. Consequently, any
constant $t_0$ in Conjecture~\ref{conj:Chvatal} must satisfy $t_0\ge9/4$.
For other results concerning $2$-factors in tough
graphs, see, e.g.,
\cite{EnomotoJacksonKaterinisSaito1985,Shan2025Planar, Shan2026TriangleFree} and the references therein.
Substantial progress has also been made for restricted graph classes and
under additional degree conditions; see, e.g.,
\cite{Keil1985,KratschLehelMuller1996,DeogunKratschSteiner1997,
BauerBroersmavanDenHeuvelVeldman1995,KabelaKaiser2017,
Shan2020, OtaSanka2022,SankaShan2024}.

Scheinerman and Ullman defined fractional Hamiltonicity through the
following linear relaxation of a Hamiltonian
cycle~\cite[Section~2.3]{ScheinermanUllman1997}. For a
nonempty proper set $S\subsetneq V(G)$, let
$\partial_G(S):=\{uv\in E(G):|\{u,v\}\cap S|=1\}$. When the underlying graph
is clear, we write $\partial(S)$. For an edge weighting $x:E(G)\to\mathbb R$, write $x_e:=x(e)$ for
$e\in E(G)$ and, for $F\subseteq E(G)$, write
$x(F):=\sum_{e\in F}x_e$. An edge weighting
$f:E(G)\to[0,1]$ is a \emph{fractional Hamiltonian cycle} if
$f(E(G))=|V(G)|$ and $f(\partial(S))\ge2$ for every nonempty proper set
$S\subsetneq V(G)$. A graph is \emph{fractionally Hamiltonian} if it has a
fractional Hamiltonian cycle.
A similar formulation also appears in combinatorial optimization and was studied by Boyd and Elliott-Magwood under the term
\emph{SEP-feasible}~\cite{BoydElliottMagwood2007}. 

Scheinerman and Ullman proved that every
fractionally Hamiltonian graph is $1$-tough and constructed
non-fractionally Hamiltonian graphs whose toughness approaches $3/2$ from
below~\cite[Theorem~2.3.4 and Proposition~2.3.5]{ScheinermanUllman1997}.
They conjectured that every $2$-tough graph is fractionally Hamiltonian
\cite[Conjecture~2.3.6]{ScheinermanUllman1997}\footnote{Wang~\cite{Wang2009} claimed to prove this conjecture.
However, the proof as written does not establish the required upper bound
on the dual optimum: the displayed dual-feasible solution of value $n$
shows only that $\opt(D)\ge n$, whereas one must prove
$\opt(D)\le n$.}. The non-fractional statement that
every $2$-tough graph is Hamiltonian, which motivated their discussion, was
subsequently disproved by the construction of Bauer, Broersma, and
Veldman~\cite{BauerBroersmaVeldman2000}, though that construction does not
by itself disprove the fractional conjecture.
Define the critical toughness infimum for fractional Hamiltonicity by
\[
 t^*_{\mathrm{FH}}:=\inf\{t>0:\text{ every $t$-tough graph on at least
 three vertices is fractionally Hamiltonian}\}.
\]
The examples of Scheinerman and Ullman give $t^*_{\mathrm{FH}}\ge3/2$.
Our first main result relates the fractional Hamiltonicity of a graph to the
Hamiltonicity of a chordal spanning supergraph. A graph is \emph{chordal} if it has no
induced cycle of length at least four. The \emph{intersection graph} of a
family of sets has one vertex for each set, with two vertices adjacent
precisely when the corresponding sets intersect. Gavril proved that a graph
is chordal if and only if it is the intersection graph of a family of
subtrees of a tree~\cite{Gavril1974}.

\begin{restatable}{theorem}{transferTheorem}
\label{thm:transfer}
Every connected graph on at least three vertices that is not fractionally
Hamiltonian has a non-Hamiltonian chordal spanning supergraph.
\end{restatable}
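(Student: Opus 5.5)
The plan is to read off the chordal supergraph from an LP-duality certificate of non-fractional-Hamiltonicity. Write $n=|V(G)|$. Since $G$ is not fractionally Hamiltonian, the system $0\le x\le 1$, $x(E(G))=n$, $x(\partial(S))\ge 2$ has no solution, so Farkas' lemma gives multipliers $y_S\ge 0$ for nonempty proper $S$, $z_e\ge 0$ for $e\in E(G)$, and $\lambda\in\mathbb R$ with
\[
c_e:=\sum_{S:\,e\in\partial(S)} y_S+\lambda-z_e\le 0 \quad(e\in E(G)),\qquad 2\sum_S y_S+\lambda n-\sum_e z_e>0 .
\]
For any $H\supseteq G$ on $V(G)$, the incidence vector of a Hamiltonian cycle of $H$ is feasible for the same system over $E(H)$, and it satisfies the degree equalities $x(\partial(\{v\}))=2$. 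So it suffices to build a chordal spanning $H\supseteq G$ and extend the certificate to $E(H)$ with all new coefficients $c_e\le 0$. Two extra freedoms are available, because Hamiltonian cycles satisfy more than the relaxation: the singleton multipliers may be taken as free variables $\mu_v\in\mathbb R$ (degree equalities), and upper-bound multipliers $z_e$ may be put on new edges at a price.

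\textbf{Step 1 (uncrossing).} Make $\{S:y_S>0\}$ laminar. Replacing $S$ by $V\setminus S$ changes nothing, since $\partial(S)=\partial(V\setminus S)$. For crossing $A,B$, the edgewise inequality $[e\in\partial(A\cap B)]+[e\in\partial(A\cup B)]\le[e\in\partial(A)]+[e\in\partial(B)]$ lets us shift weight from $A,B$ to $A\cap B,A\cup B$ (or to the two differences). This keeps $\sum y_S$ and does not increase any coefficient $c_e$. A standard potential argument then yields a laminar family $\mathcal L$.

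\textbf{Step 2 (tree metric and chordal graph).} Encode $\mathcal L$ as a rooted tree $T$ whose edges carry lengths $y_S$, with each vertex $v$ mapped to the node of the smallest set of $\mathcal L$ containing it. Then $d(u,v):=\sum_{S\in\mathcal L:\,uv\in\partial(S)}y_S$ is the distance in $T$ (viewed as a metric tree). Absorb the singleton multipliers into vertex radii $r_v$, and let $H$ be the intersection graph of the closed balls $B_v:=B_T(v,r_v)$ for suitable radii. Balls in a metric tree are subtrees, so by Gavril's theorem $H$ is chordal. Two balls meet iff $d(u,v)\le r_u+r_v$. The radii are chosen so that this holds exactly when the extended coefficient $d(u,v)+\lambda-\mu_u-\mu_v$ is $\le 0$, taking $r_u$ of the form $(\mu_u-\lambda/2)/$const up to normalization. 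With this choice, every new edge of $H$ gets $c_e\le 0$ and $z_e=0$, so the objective is unchanged and $H$ has no Hamiltonian cycle.

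\textbf{Step 3 (main obstacle: $G\subseteq H$).} An edge $uv\in E(G)$ with $z_{uv}>0$ may have $d(u,v)>r_u+r_v$, so it is not automatically an edge of $H$. I would first pick an optimal certificate that is extremal, for example minimizing $\sum z_e$ and then lexicographically maximizing radii. I would then show that a positive $z_{uv}$ can be traded for an increase of $\mu_u$ or $\mu_v$ without lowering $2\sum y_S+\lambda n+\dots$. Each unit of $z$ costs one unit in the objective, while raising $\mu_u$ by $z_{uv}$ costs $2z_{uv}$ but fixes all edges at $u$ at once. So this trade alone is not free, and a cleverer exchange is needed. If it fails, the fallback is to enlarge $H$ by making each such $B_u$ also contain the path to $v$ in $T$. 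Enlarged balls are still subtrees, so $H$ stays chordal. The newly created intersections must then be charged to $z$ in the way the $G$-edge $uv$ already was, and checking that this accounting stays within the positive slack $2\sum y_S+\lambda n-\sum z_e>0$ is where I expect the real work to lie.
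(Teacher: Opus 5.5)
Your Step~3 is a genuine gap, and you flag it yourself. For a certificate with some $z_{uv}>0$, nothing puts $uv$ into $H$, and the exchange you try is not free. The fallback of enlarging $B_u$ along the path to $\alpha(v)$ creates new intersections with arbitrary other balls. Each of these would need its own paid multiplier $z_e$, and you give no argument that the slack $2\sum_S y_S+\lambda n-\sum_e z_e$ covers them. So as written you never obtain a chordal \emph{supergraph} of $G$.

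The missing idea is that the bounds $x_e\le1$ are redundant and should not be in the system at all. Suppose $x\ge0$, $x(E)=n$ and $x(\partial(S))\ge2$ for all nonempty proper $S$. Summing the singleton constraints forces $x(\partial(\{v\}))=2$ for every $v$. For $uv\in E$, the constraint on $\partial(\{u,v\})$ (a nontrivial cut since $n\ge3$) then reads $4-2x_{uv}\ge2$, i.e.\ $x_{uv}\le1$. Hence the system without upper bounds is already infeasible, and Farkas yields a certificate with $z\equiv0$.

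Connectivity then forces $\lambda<0$. If $\lambda\ge0$, every $y_S$ vanishes since every cut is nonempty, and then $\lambda n>0$ contradicts $c_e=\lambda\le0$. After scaling to $\lambda=-1$, every edge of $G$ has total cut load at most $1$ while $2\sum_S y_S>n$. This is exactly the paper's setting, reached there via Lemma~\ref{lem:LP-characterization} ($\opt(P)=n$ if and only if $G$ is fractionally Hamiltonian) plus strong duality. After uncrossing, the tree distance between $\alpha(u)$ and $\alpha(v)$ equals the load on $uv$, so $G\subseteq H_y$ comes for free. A Hamiltonian cycle $C$ of $H_y$ would then give $n\ge\sum_F y_F|\partial_C(F)|\ge2\sum_F y_F>n$.

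There is a second, independent problem in Step~2. You absorb the singleton multipliers into vertex-dependent radii; after normalizing $\lambda=-1$ this means $r_u=\tfrac12-y_{\partial(\{u\})}$, and nothing keeps these nonnegative. If $r_u<0$ then $B_u=\emptyset$, and ``the balls meet iff $d(u,v)\le r_u+r_v$'' is false. This really happens in $H_3$. Put weight $3/4$ on each $\partial(\{x_i\})$, and weight $1/4$ on each $\partial(\{b_i\})$ and each $\partial(\{x_i,b_i\})$. This is a laminar dual solution with every edge load exactly $1$ and value $15/2>7$. So $r_{x_i}=-1/4$, and $x_i$ would be isolated in your $H$.

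The fix is to keep the singleton cuts in the tree as pendant edges and use the uniform radius $\tfrac12$, as the paper does. Equivalently, hang a pendant edge of length $L-r_u$ at each $\alpha(u)$ for large $L$. The extra freedom of free multipliers $\mu_v$ is not needed.
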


The proof constructs the required chordal supergraph from an optimal
solution of the dual fractional-Hamiltonian linear program with laminar
support.
Since adding edges does not decrease toughness, Theorem~\ref{thm:transfer} implies
that if every $q$-tough chordal graph on at least three vertices is
Hamiltonian, then every $q$-tough graph on at least three vertices is
fractionally Hamiltonian. Indeed, otherwise a $q$-tough graph that is not
fractionally Hamiltonian would have a non-Hamiltonian chordal spanning
supergraph, which would also be $q$-tough. Chen, Jacobson, K\'ezdy, and Lehel~\cite{ChenJacobsonKezdyLehel1998}
proved that every $18$-tough chordal graph on at least three vertices is Hamiltonian. Kabela and
Kaiser~\cite{KabelaKaiser2017} improved this bound to $10$. Therefore, Theorem~\ref{thm:transfer}, together with Kabela and
Kaiser's result, immediately gives the following consequence.

\begin{restatable}{theorem}{tentoughFractional}
\label{thm:10-tough-fractional}
Every $10$-tough graph on at least three vertices is fractionally
Hamiltonian.
\end{restatable}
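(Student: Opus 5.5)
We argue by contradiction, combining Theorem~\ref{thm:transfer} with the theorem of Kabela and Kaiser~\cite{KabelaKaiser2017} that every $10$-tough chordal graph on at least three vertices is Hamiltonian. Let $G$ be a $10$-tough graph on at least three vertices and suppose $G$ is not fractionally Hamiltonian.

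First we dispose of degenerate cases. If $G$ is complete, then $G$ is Hamiltonian, since it has at least three vertices. The indicator weighting of a Hamiltonian cycle $C$ is then a fractional Hamiltonian cycle. Indeed, each edge weight lies in $\{0,1\}$, the total weight is $|E(C)|=|V(G)|$, and every nonempty proper $S$ has $|\partial(S)\cap E(C)|\ge 2$ because $C$ is a cycle through all vertices. Hence $G$ is noncomplete. Any noncomplete graph with finite toughness $\tau(G)\ge 10>0$ is connected, since a disconnected graph has toughness $0$ (take $S=\emptyset$). So $G$ satisfies the hypotheses of Theorem~\ref{thm:transfer}.

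Applying Theorem~\ref{thm:transfer}, we obtain a chordal spanning supergraph $H\supseteq G$ with $V(H)=V(G)$ that is not Hamiltonian. We next check that $H$ is $10$-tough. If $H$ is complete, then $H$ is Hamiltonian because $|V(H)|\ge 3$, a contradiction. Otherwise, take any $S\subseteq V(H)$ with $c(H-S)>1$. Since $E(G)\subseteq E(H)$, every component of $H-S$ is a union of components of $G-S$, so $c(G-S)\ge c(H-S)>1$. Therefore
\[
\frac{|S|}{c(H-S)}\ \ge\ \frac{|S|}{c(G-S)}\ \ge\ \tau(G)\ \ge\ 10,
\]
which gives $\tau(H)\ge 10$.

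Thus $H$ is a $10$-tough chordal graph on at least three vertices, and by Kabela and Kaiser it is Hamiltonian. This contradicts the choice of $H$, so $G$ is fractionally Hamiltonian. All the substance lies in Theorem~\ref{thm:transfer}; the only steps needing care here are the monotonicity of toughness under adding edges and the connectivity and completeness edge cases, and both are routine.
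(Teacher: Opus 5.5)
Your proposal is correct and follows essentially the same route as the paper: you assume $G$ is not fractionally Hamiltonian, apply Theorem~\ref{thm:transfer} to obtain a non-Hamiltonian chordal spanning supergraph $H$, observe that $H$ is still $10$-tough because adding edges can only merge components, and then contradict the theorem of Kabela and Kaiser. Your explicit handling of the complete case and of connectivity (which Theorem~\ref{thm:transfer} requires) is a small but welcome addition that the paper leaves implicit.
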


A recent preprint of Huang~\cite{Huang2026} states that every
$5$-tough chordal graph is Hamilton-connected.
Assuming Huang's result, Theorem~\ref{thm:transfer} would
strengthen Theorem~\ref{thm:10-tough-fractional} by replacing $10$ with
$5$. However, in this paper we make no attempt to optimize this constant.

We next discuss the application to resistance curvature. Ricci curvature
plays an important role in the geometric analysis of Riemannian manifolds,
and several analogues have been introduced for nonsmooth and discrete spaces,
including Bakry--\'{E}mery curvature~\cite{BakryEmery1985}, Ollivier's
coarse Ricci curvature~\cite{Ollivier2009}, the synthetic lower curvature
bounds of Lott--Villani and Sturm
\cite{LottVillani2009,Sturm2006I,Sturm2006II}, Forman curvature
\cite{Forman2003}, and Lin-Lu-Yau curvature~\cite{LinLuYau2011}; see also
\cite{ChungYau1996, Higuchi2001, LinYau2010, BauerChungLinLiu2017} and the
references therein.

Devriendt and Lambiotte introduced a vertex curvature based on effective
resistance~\cite{DevriendtLambiotte2022}. Effective resistance originates in
electrical network theory and has a rich probabilistic and geometric theory;
see, for example,~\cite{KleinRandic1993,LyonsPeres2016}. Further properties
of resistance curvature and resistance distance were studied in
\cite{Devriendt2022,DevriendtOttoliniSteinerberger2024,Devriendt2026}.
Let $G=(V,E)$ be a finite simple connected graph, let
$c=(c_e)_{e\in E}$ be positive edge weights, and let $\mathcal T(G)$ be
the set of spanning trees of $G$. For an edge $e=uv$, the
\emph{effective resistance} between $u$ and $v$ is
\[
 \omega_e(c)=c_e^{-1}
 \frac{\displaystyle\sum_{\substack{T\in\mathcal T(G)\\e\in T}}
                 \prod_{h\in T}c_h}
      {\displaystyle\sum_{T\in\mathcal T(G)}\prod_{h\in T}c_h}.
\]
The \emph{relative resistance} of $e$ is $r_e(c):=c_e\omega_e(c)$.
Equivalently, $r_e(c)$ is the probability that $e$ belongs to a random
spanning tree sampled from the log-linear distribution $\mu_c(T)=\frac{\prod_{e\in T}c_e}{\sum_{T'\in\mathcal T(G)}\prod_{e\in T'}c_e}$. Foster's theorem~\cite{Foster1949} gives
$\sum_{e\in E}r_e(c)=|V|-1$. The \emph{resistance curvature} of a vertex
$v$ is $$p_v(c):=1-\frac12\sum_{e\ni v}r_e(c).$$
Following Devriendt~\cite{Devriendt2026}, a graph is \emph{resistance
nonnegative}, abbreviated RN, if it admits positive edge weights $c$ for which
$p_v(c)\ge0$ at every vertex. Similarly, a graph is \emph{resistance positive},
abbreviated RP, if it admits positive edge weights $c$ for which
$p_v(c) > 0$ at every vertex. An RN graph that is not
RP is \emph{strictly resistance nonnegative}, or SRN. A distribution on
$\mathcal T(G)$ is \emph{positive} if every spanning tree receives positive
probability. Devriendt proved the following characterization.

\begin{theorem}\cite[Theorem~3.8]{Devriendt2026}
\label{thm:devriendt-characterization}
A graph $G$ is RN, respectively RP, if and only if there exists a positive
distribution $\mu$ on $\mathcal T(G)$ such that
$\mathbb E_\mu[\deg_T(v)]\le2$, respectively
$\mathbb E_\mu[\deg_T(v)]<2$, for every $v\in V(G)$.
\end{theorem}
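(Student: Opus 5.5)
The plan is to identify the resistance curvature with an expected tree degree under the spanning-tree measure $\mu_c$. The forward direction is then immediate. The reverse direction needs every edge-marginal vector of a positive distribution on $\mathcal T(G)$ to be realised by a log-linear distribution $\mu_c$.

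\emph{Step 1 (curvature as expected degree).} Since $r_e(c)=\Pr_{\mu_c}[e\in T]$, linearity of expectation gives
\[
\sum_{e\ni v} r_e(c)=\mathbb E_{\mu_c}\Big[\sum_{e\ni v}\mathbf 1_{e\in T}\Big]=\mathbb E_{\mu_c}[\deg_T(v)],
\]
so $p_v(c)=1-\tfrac12\mathbb E_{\mu_c}[\deg_T(v)]$. Hence $p_v(c)\ge0$ (resp.\ $>0$) at every vertex if and only if $\mathbb E_{\mu_c}[\deg_T(v)]\le2$ (resp.\ $<2$) at every vertex. When $c>0$, every tree has positive weight $\prod_{h\in T}c_h$, so $\mu_c$ is positive. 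This proves the ``only if'' direction in both the RN and RP cases.

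\emph{Step 2 (reduction to marginals).} Let $\mu$ be positive and set $z_e:=\Pr_\mu[e\in T]$. Expected degrees depend only on $z$, since $\mathbb E_\mu[\deg_T(v)]=\sum_{e\ni v}z_e$. It therefore suffices to find $c>0$ with $r_e(c)=z_e$ for all $e$. Note that $z=\sum_T\mu(T)\mathbf 1_T$ is a convex combination of all vertices of the spanning tree polytope $P$ with strictly positive coefficients. Hence $z$ lies in the relative interior of $P$.

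\emph{Step 3 (maximum entropy).} Consider maximising the entropy $H(\nu)=-\sum_T\nu(T)\log\nu(T)$ over distributions $\nu$ on $\mathcal T(G)$ subject to $\sum_{T\ni e}\nu(T)=z_e$ for all $e$. The feasible set is a nonempty compact polytope containing $\mu$, and $H$ is strictly concave, so there is a unique maximiser $\nu^*$. The derivative of $-t\log t$ blows up at $t=0$, and the feasible set contains the strictly positive point $\mu$. Moving from any boundary point toward $\mu$ therefore increases entropy, so $\nu^*$ has full support.

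Because $\nu^*$ is an interior optimum of a concave program with linear constraints, the KKT conditions hold with multipliers $\gamma_e$ and one for normalisation. They read $-\log\nu^*(T)-1-\lambda-\sum_{e\in T}\gamma_e=0$, so $\nu^*(T)\propto\prod_{e\in T}e^{-\gamma_e}$. Setting $c_e:=e^{-\gamma_e}>0$ gives $\nu^*=\mu_c$, hence $r_e(c)=z_e$. Combined with Step 1, this proves the ``if'' direction.

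\emph{Main obstacle.} The delicate point is the existence of finite multipliers $\gamma$. Equivalently, the dual problem $\min_\gamma \log\sum_T e^{\gamma\cdot\mathbf 1_T}-\gamma\cdot z$ must attain its minimum, which can fail when $z$ lies on the boundary of $P$. I would handle this directly via Step 3's interior argument: on the relative interior of the feasible affine slice the constraints are linear, so Lagrange multipliers exist with no constraint qualification beyond linearity. Any redundancy among the constraints, for example $\sum_e z_e=|V|-1$ or $z_e=1$ for bridges, only makes $\gamma$ non-unique, not nonexistent. As a cross-check, I would verify coercivity of the dual modulo its lineality space using $z\in\operatorname{relint}P$.
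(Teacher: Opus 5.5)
The paper gives no proof of this statement. It imports it from \cite[Theorem~3.8]{Devriendt2026} and uses it as a black box in Lemmas~\ref{lem:strict-marginal-RP} and~\ref{lem:weighted-obstruction}. Your argument is a correct, self-contained proof.

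Step~1 is exactly the identity $r_e(c)=\Pr_{\mu_c}[e\in T]$ noted in the introduction, and it gives both ``only if'' directions at once. For the ``if'' direction you isolate the real content correctly: every marginal vector of a positive distribution on $\mathcal T(G)$ has the form $(r_e(c))_{e\in E}$ for some $c>0$. Your maximum-entropy argument proves this.

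\begin{itemize}
\item The one-sided derivative of $H$ along the segment toward $\mu$ is $+\infty$ at any point with a zero coordinate. This rules out maximisers without full support.
\item Once $\nu^*$ lies in the open orthant, the multipliers exist by pure linear algebra. $\nabla H(\nu^*)$ annihilates the kernel of the equality-constraint matrix, so it lies in that matrix's row space, redundant rows included.
\end{itemize}

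So the concern in your last paragraph is already settled by Step~3, and the coercivity cross-check is unnecessary. The boundary failure you mention is real, but it cannot occur here because $\mu$ has full support.

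Your route actually yields a slightly stronger statement: the set $\{(r_e(c))_e : c>0\}$ is exactly the set of marginals of positive distributions. That set is the relative interior of the spanning-tree polytope $P(G)$. This is the standard maximum-entropy (equivalently, moment-map) description. It is also the form in which the paper really uses the theorem in Lemma~\ref{lem:strict-marginal-RP}, where a point of $P(G)$ is first pushed to a positive distribution by mixing with the uniform one.
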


Devriendt proved that every Hamiltonian graph is RP
\cite[Theorem~6.2]{Devriendt2026}, while a result of Fiedler
\cite[Theorem~3.4.18]{Fiedler2011}, as applied by Devriendt
\cite[Theorem~1.3]{Devriendt2026}, implies that every RP graph is
$1$-tough. Motivated by these containments, Devriendt made the following
conjecture.

\begin{conjecture}\cite[Conjecture~6.4]{Devriendt2026}
\label{conj:tough-RP}
There is a constant $t_{\mathrm{RP}}$ such that every
$t_{\mathrm{RP}}$-tough graph is RP.
\end{conjecture}

Devriendt~\cite[Question~6.6]{Devriendt2026} asked whether every $1$-tough graph is RP, which was very recently disproved by Agrahari, Bibby, Boros, Garcia, Heidercheidt, and Wang~\cite{AgrahariBibbyBorosGarciaHeidercheidtWang2026}. 
We show the following theorem connecting fractional Hamiltonicity and the RP property. 

\begin{restatable}{theorem}{fractionalHamiltonianRP}
\label{thm:fractional-Hamiltonian-RP}
Every fractionally Hamiltonian graph is RP.
\end{restatable}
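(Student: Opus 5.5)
The plan is to pass from a fractional Hamiltonian cycle to a probability distribution on spanning trees and then invoke Devriendt's characterization, Theorem~\ref{thm:devriendt-characterization}. Fix a fractional Hamiltonian cycle $f$ of $G$ and let $n=|V(G)|$. Applying the cut condition to singletons gives $f(\partial(\{v\}))\ge 2$ for every vertex $v$. Since $\sum_v f(\partial(\{v\}))=2f(E(G))=2n$, equality holds: $f(\partial(\{v\}))=2$ for every $v$. So $f$ is a fractional $2$-factor that additionally satisfies all subtour constraints.

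\textbf{Step 1: a scaled copy of $f$ lies in the spanning tree polytope.} For a nonempty $S\subseteq V(G)$, write $E[S]$ for the edges with both ends in $S$. Double counting gives
\[
f(E[S])=\tfrac12\Bigl(\sum_{v\in S}f(\partial(\{v\}))-f(\partial(S))\Bigr)=|S|-\tfrac12 f(\partial(S)).
\]
For a proper nonempty $S$ this is at most $|S|-1$, by the cut condition. Now set $y:=\frac{n-1}{n}f$. Then $y\ge 0$ and $y(E(G))=n-1$. For proper nonempty $S$ we have $y(E[S])\le \frac{n-1}{n}(|S|-1)\le |S|-1$, and for $S=V(G)$ we have equality. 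By Edmonds' description of the base polytope of the graphic matroid, $y$ lies in the spanning tree polytope $\{x\ge0: x(E)=n-1,\ x(E[S])\le |S|-1 \ \forall S\neq\emptyset\}$. Hence $y$ is a convex combination of incidence vectors of spanning trees. Equivalently, there is a distribution $\nu$ on $\mathcal T(G)$ with $\Pr_\nu[e\in T]=y_e$ for each edge $e$. Consequently
\[
\mathbb E_\nu[\deg_T(v)]=y(\partial(\{v\}))=\tfrac{n-1}{n}\cdot 2=2-\tfrac2n<2 \quad\text{for every } v.
\]

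\textbf{Step 2: make the distribution positive.} Let $\upsilon$ be the uniform distribution on $\mathcal T(G)$; it is nonempty since $G$ is connected. Put $\mu:=(1-\varepsilon)\nu+\varepsilon\upsilon$ with $0<\varepsilon<1$. Then $\mu$ is positive. Since $\deg_T(v)\le n-1$ always, we get $\mathbb E_\mu[\deg_T(v)]\le (1-\varepsilon)(2-\tfrac2n)+\varepsilon(n-1)$. This is $<2$ once $\varepsilon$ is small enough, for instance $\varepsilon<\frac{2/n}{n-1}$. Theorem~\ref{thm:devriendt-characterization} then gives that $G$ is RP.

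The only nontrivial ingredient is Step 1, namely the verification of the subtour inequalities $y(E[S])\le|S|-1$ and the appeal to Edmonds' theorem. The key observation there is that the cut inequalities, together with the degree equalities forced by the total weight, translate exactly into the spanning-tree-polytope constraints after scaling by $\frac{n-1}{n}$.

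Small cases need only a remark. For $n\le 2$ no fractional Hamiltonian cycle exists: when $n=2$ the single possible edge has weight at most $1<2$. So we may assume $n\ge3$ throughout, although the argument above does not actually use this.
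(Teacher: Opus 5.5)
Your proof is correct and follows essentially the same route as the paper. You force $f(E(v))=2$ from the singleton cuts and the total weight, then scale by $\frac{n-1}{n}$ to land in Edmonds' spanning-tree polytope via $f(E[S])=|S|-\frac12 f(\partial(S))\le|S|-1$, then mix with the uniform distribution and apply Devriendt's characterization. The only cosmetic difference is that you give an explicit mixing parameter $\varepsilon<\frac{2/n}{n-1}$, where the paper simply takes $\eta$ sufficiently small.
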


Combining Theorem~\ref{thm:fractional-Hamiltonian-RP} with
Theorem~\ref{thm:10-tough-fractional} resolves
Conjecture~\ref{conj:tough-RP} affirmatively.

\begin{restatable}{corollary}{tentoughRP}
\label{thm:10-tough-RP}
Every $10$-tough graph is RP.
\end{restatable}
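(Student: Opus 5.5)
The statement follows by composing two results established or quoted earlier, with one small case to handle. Let $G$ be $10$-tough. If $G$ has at least three vertices, Theorem~\ref{thm:10-tough-fractional} shows that $G$ is fractionally Hamiltonian, and Theorem~\ref{thm:fractional-Hamiltonian-RP} then shows that $G$ is RP. Before writing the proof, I will check that the hypotheses match. Throughout the paper graphs are connected unless stated otherwise, so RP is only defined for connected graphs, and Theorem~\ref{thm:10-tough-fractional} has no further hypotheses beyond order at least three.

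The remaining work is the graphs on at most two vertices. These are complete, so they have infinite toughness and are $10$-tough; they must therefore be checked directly against the definition of RP, or via Theorem~\ref{thm:devriendt-characterization}.

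For $K_1$, the only spanning tree is the single vertex. Its degree is $0<2$, so $\mathbb E_\mu[\deg_T(v)]<2$ for the unique distribution, which is positive. Equivalently, $p_v=1>0$ for the empty weighting.

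For $K_2$, the unique spanning tree is the edge itself. Each vertex has degree $1<2$, so the unique distribution, again positive, satisfies the strict inequality. Equivalently, $r_e=1$ and $p_v=1-\tfrac12=\tfrac12>0$.

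Hence every $10$-tough graph is RP. There is no genuine obstacle here. The only points needing care are this small-order case and the connectivity convention. All the difficulty lies in the two theorems being invoked, Theorem~\ref{thm:transfer} (via Kabela--Kaiser) and Theorem~\ref{thm:fractional-Hamiltonian-RP}, which are proved separately in the paper.
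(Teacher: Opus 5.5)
Your proposal is correct and matches the paper's proof: for order at least three you chain Theorem~\ref{thm:10-tough-fractional} with Theorem~\ref{thm:fractional-Hamiltonian-RP}, and for order at most two you note that the graph is complete and verify RP directly via Theorem~\ref{thm:devriendt-characterization}. Your explicit checks for $K_1$ and $K_2$ just spell out the step the paper states as following ``directly.''
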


Finally, we give lower bounds for the toughness needed to force resistance
nonnegativity and resistance positivity. For integers $m\ge2$ and
$\ell\ge1$, let $H_m$ be obtained from the complete graph on
$\{a,b_1,\ldots,b_m\}$ by subdividing each edge $ab_i$ once, with
subdivision vertex $x_i$. This is the one-subdivision case, called $G_m(1,1,\cdots,1)$, of the family
studied by Agrahari, Bibby, Boros, Garcia, Heidercheidt, and Wang
\cite{AgrahariBibbyBorosGarciaHeidercheidtWang2026}. Inspired by their construction, we define 
$J_{m,\ell}:=K_\ell\vee H_m$, where $\vee$ denotes the join operation. We determine the
exact toughness of $J_{m,\ell}$ and show that it is not RN when
$m\ge2\ell+4$.

\begin{restatable}{theorem}{almostThreeHalves}
\label{thm:almost-3/2}
For every $\varepsilon>0$, there exists a graph $G$ that is not RN and
satisfies $\tau(G)>\frac32-\varepsilon$.
\end{restatable}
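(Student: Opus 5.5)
The plan is to take $G=J_{m,\ell}$ with $m=2\ell+4$ and let $\ell\to\infty$. Two things are needed: an exact (or at least lower) bound on $\tau(J_{m,\ell})$ of the form $\frac{\ell+m}{m}$, which equals $\frac{3\ell+4}{2\ell+4}\to\frac32$, and a proof that $J_{m,\ell}$ is not RN via Theorem~\ref{thm:devriendt-characterization}. Write $K$ for the copy of $K_\ell$, $B=\{b_1,\dots,b_m\}$ (a clique), and $X=\{x_1,\dots,x_m\}$, where $x_i$ has neighbours $a$, $b_i$ and all of $K$.

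\emph{Toughness.} Every disconnecting set $S$ contains $K$, since $K$ is joined to everything. So it suffices to study $S'=S\setminus K$ disconnecting $H_m$, and to minimize $(\ell+|S'|)/c(H_m-S')$.

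I will case on whether $a\in S'$.
\begin{itemize}
\item If $a\notin S'$, then $a$ and all surviving $x_i$ form one component, and the surviving $b_j$ lie in a clique. Hence $c\le 2$, and separating $a$ from $B$ forces $S'$ to contain, for each $i$, one of $x_i$ or $b_i$. Thus $|S'|\ge m$, and the ratio is at least $(\ell+m)/2$.
\item If $a\in S'$, the components of $H_m-S'$ are the isolated $x_i$ with $b_i\in S'$, together with at most one further component consisting of the surviving part of $B$ and its attached $x$'s. Let $k$ be the number of $b_i\in S'$ (with $x_i\notin S'$). Then $|S'|\ge 1+k$ and $c\le k+1$ when $k<m$, or $|S'|=m+1$ and $c=m$ when $k=m$. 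Minimizing, the best ratio is $(\ell+m)/m$, attained at $k=m-1$.
\end{itemize}
This should give $\tau(J_{m,\ell})=\min\{(\ell+m)/m,\ (\ell+m)/2\}=(\ell+m)/m$ for $m\ge2$. I will verify these counts carefully, including sets that also delete some $x_i$.

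\emph{Not RN.} Suppose $\mu$ is a positive distribution on $\mathcal T(G)$ with $\mathbb E_\mu[\deg_T(v)]\le2$ for all $v$. Let $S=K\cup B\cup\{a\}$, so $V\setminus S=X$ is independent in $G$. For any spanning tree $T$, every edge meets $S$, so
\[
\sum_{v\in S}\deg_T(v)=(n-1)+e(T[S]),\qquad n=\ell+1+2m.
\]
Each $x_i$ has $\deg_T(x_i)\in\{1,2,\ldots\}$; let $k(T)$ denote the total excess $\sum_i(\deg_T(x_i)-1)$. Deleting the leaves and cut-vertices in $X$ from $T$ gives
\[
e(T[S])=|S|-1-k(T),\qquad\text{so}\qquad \sum_{v\in S}\deg_T(v)=n-2+|S|-k(T).
\]
Summing the hypothesised inequalities over $S$ yields $\mathbb E[k(T)]\ge n-2-|S|=m-3$, up to the exact count. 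On the other hand, I want an upper bound on $k(T)$ in terms of degrees we control. Any $x_i$ of $T$-degree at least $2$ uses an edge to $a$ or to $K$. Hence
\[
k(T)\le \deg_T(a)+\sum_{u\in K}\deg_T(u),
\]
and taking expectations gives $\mathbb E[k]\le 2+2\ell$. This contradicts $\mathbb E[k]\ge m-3$ once $m$ exceeds roughly $2\ell+4$, which matches the threshold in the paper.

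\emph{Main obstacle.} The delicate step is making these two counts exactly tight so that the threshold lands at $m\ge2\ell+4$. The edges from $K$ both help $X$ and count toward $K$'s own degree budget, and edges inside $K\cup B$ must be handled carefully. I would first redo the identity with $k(T)$ defined via $c(T-X)$, and sharpen the upper bound by subtracting the $T$-edges that $K$ must spend on connecting to $B$. Finally, I would combine this with the toughness formula at $m=2\ell+4$ and $\ell$ large enough that $\frac{3\ell+4}{2\ell+4}>\frac32-\varepsilon$.
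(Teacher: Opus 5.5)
Your proposal is correct and is essentially the paper's proof: the same graphs $J_{m,\ell}$ and the same toughness case analysis. Your two counts (the identity for $\sum_{v\in S}\deg_T(v)$ together with $k(T)\le\deg_T(a)+\sum_{u\in U}\deg_T(u)$, writing $U$ for your $K$) add up to exactly the paper's weighted obstruction with $\lambda=2$ on $a$ and $U$, $\lambda=1$ on $B$ and $\lambda=0$ on the $x_i$, i.e.\ $\sum_v\lambda(v)\deg_T(v)\ge 3m+2\ell$ for every spanning tree $T$.

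Two fixes are needed. First, the count gives $\mathbb E[k]\ge n-2-|S|=m-2$, not $m-3$; so you get a contradiction outright for $m\ge 2\ell+5$, which already proves the theorem since $1+\ell/m\to\tfrac32$. Second, at $m=2\ell+4$ your suggested tree-by-tree sharpening cannot work. The paper's tree $T_0$ (all $x_ib_i$, a spanning tree of $B$, $ax_1$, and all $ux_1$) has no $U$--$B$ edges and attains equality. The strict inequality must instead come from positivity of $\mu$ on a tree using an edge inside $U\cup\{a\}$ or between $U$ and $B$, as the paper does with $T_1=T_0-ux_1+ub_1$.
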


The preceding results give
\[
 \{\text{Hamiltonian graphs}\}\subsetneq
 \{\text{fractionally Hamiltonian graphs}\}\subsetneq
 \{\text{RP graphs}\}\subsetneq
 \{\text{$1$-tough graphs}\}.
\]
Each containment relation is strict. Indeed, the Petersen graph is fractionally Hamiltonian but not Hamiltonian
\cite[Section~2.3]{ScheinermanUllman1997}; the graph $H_3$ is RP but not
fractionally Hamiltonian, as shown in
Section~\ref{sec:resistance}; and, for every $m\ge4$, the graph $H_m$ is
$1$-tough but not RP by the argument in
\cite[Section~3]{AgrahariBibbyBorosGarciaHeidercheidtWang2026}.
Similarly, define
\[
\begin{aligned}
 t^*_{\mathrm{RN}}&:=\inf\{t>0:\text{ every $t$-tough graph is RN}\},\\
 t^*_{\mathrm{RP}}&:=\inf\{t>0:\text{ every $t$-tough graph is RP}\}.
\end{aligned}
\]
Since fractional Hamiltonicity implies RP and RP implies RN, the thresholds
satisfy $t^*_{\mathrm{RN}}\le t^*_{\mathrm{RP}}\le t^*_{\mathrm{FH}}$.
Our results give
\[
 \frac32\le t^*_{\mathrm{RN}}\le t^*_{\mathrm{RP}}
 \le t^*_{\mathrm{FH}}\le10.
\]

\medskip

{\noindent\bf Organization and Notation.}
In Section~\ref{sec:threshold}, we prove Theorem~\ref{thm:transfer} and
Theorem~\ref{thm:10-tough-fractional}. In
Section~\ref{sec:resistance}, we prove
Theorem~\ref{thm:fractional-Hamiltonian-RP} and
Corollary~\ref{thm:10-tough-RP}. In Section~\ref{sec:construction}, we study
the graphs $J_{m,\ell}$ and prove Theorem~\ref{thm:almost-3/2}. For a spanning tree
$T$, we write $\deg_T(v)$ for the degree of $v$ in $T$, and for a graph $G$
we write $\mathcal T(G)$ for its set of spanning trees. For $v\in V(G)$,
let $E(v)$ denote the set of edges incident with $v$. For a positive integer $k$, write $[k]:=\{1,\ldots,k\}$.

\section{A finite toughness threshold for fractional Hamiltonicity}
\label{sec:threshold}

Throughout this section, let $G=(V,E)$ be a connected graph of order
$n\ge3$. Let
$\mathcal C(G):=\{\partial(S):\emptyset\neq S\subsetneq V\}$ be the
collection of distinct nontrivial edge cuts of $G$. Thus a cut is included
only once, even though $\partial(S)=\partial(V\setminus S)$. Since $G$ is
connected, every $F\in\mathcal C(G)$ determines a unique unordered
bipartition of $V$. We call $A$ and $V\setminus A$ the two \emph{shores}
of the cut $F=\partial(A)$, and say that $F$ \emph{separates} two vertices
$u,v$ if they lie in different shores.

We use the fractional Hamiltonian linear program of Scheinerman and
Ullman~\cite[Section~2.3]{ScheinermanUllman1997}. 
For readers less familiar with linear programming, we recall the form of
duality used here. If $A$ is a finite matrix, then the dual of
$\min\{c^{\mathsf T}x:Ax\ge b,\ x\ge0\}$ is
$\max\{b^{\mathsf T}y:A^{\mathsf T}y\le c,\ y\ge0\}$. Weak duality
states that $b^{\mathsf T}y\le c^{\mathsf T}x$ for every feasible primal
vector $x$ and feasible dual vector $y$. If the primal is feasible and has a
finite optimum, then the dual has the same optimal value by the strong
duality theorem; see, for example,~\cite[Chapter~5]{Schrijver2003}.

Let $A$ be the $\mathcal C(G)\times E$ incidence matrix defined by
$A_{F,e}=1$ if $e\in F$ and $A_{F,e}=0$ otherwise. The primal variable is
$x=(x_e)_{e\in E}\in\mathbb R^E$, and the dual variable is
$y=(y_F)_{F\in\mathcal C(G)}\in\mathbb R^{\mathcal C(G)}$. The primal and
dual programs are
\begin{equation*}
\tag{$P$}\label{LP:P}
\begin{aligned}
  \text{minimize}\quad &x(E)\\
  \text{subject to}\quad
     &x(F)\ge2 &&(F\in\mathcal C(G)),\\
     &x_e\ge0 &&(e\in E),
\end{aligned}
\end{equation*}
and
\begin{equation*}
\tag{$D$}\label{LP:D}
\begin{aligned}
  \text{maximize}\quad &2\sum_{F\in\mathcal C(G)}y_F\\
  \text{subject to}\quad
     &\sum_{\substack{F\in\mathcal C(G)\\e\in F}}y_F\le1
        &&(e\in E),\\
     &y_F\ge0 &&(F\in\mathcal C(G)).
\end{aligned}
\end{equation*}
We write $\opt(P)$ and $\opt(D)$ for their optimal objective values. 
For completeness, weak duality for this particular pair follows directly.
If $x$ is feasible for~\eqref{LP:P} and $y$ is feasible for~\eqref{LP:D},
then
\begin{equation}\label{eq:weak-duality}
  2\sum_{F\in\mathcal C(G)}y_F
  \le \sum_{F\in\mathcal C(G)}y_Fx(F)
  =\sum_{e\in E}x_e
      \sum_{\substack{F\in\mathcal C(G)\\e\in F}}y_F
  \le\sum_{e\in E}x_e.
\end{equation}
Scheinerman and Ullman~\cite[Proposition~2.3.1]{ScheinermanUllman1997}
showed the following equivalence between fractional Hamiltonicity and its LP
formulation. We include its proof for completeness.

\begin{lemma}\label{lem:LP-characterization}~\cite[Proposition~2.3.1]{ScheinermanUllman1997}
Let $G$ be a graph on at least three vertices. Then $G$ is fractionally Hamiltonian if and only if $\opt(P)=n$.
\end{lemma}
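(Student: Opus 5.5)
The key observation is that the program \eqref{LP:P} has no upper bound $x_e\le1$, so the argument reduces to two steps. First, one always has $\opt(P)\ge n$. Second, whenever $\opt(P)\le n$, one can produce a feasible point of value $n$ with all entries in $[0,1]$. I would prove both directions of Lemma~\ref{lem:LP-characterization} from these facts.

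\emph{Lower bound $\opt(P)\ge n$.} I will use the singleton cuts. For each $v\in V$, the set $\partial(\{v\})=E(v)$ lies in $\mathcal C(G)$, since $n\ge3$ makes $\{v\}$ a nonempty proper subset. Feasibility gives $x(E(v))\ge2$. Summing over all $v$ counts each edge twice, so $2x(E)\ge 2n$, that is, $x(E)\ge n$. Equivalently, setting $y_{E(v)}=1/2$ for every $v$ (and $0$ elsewhere) is feasible for \eqref{LP:D}, because each edge lies in exactly two singleton cuts. The objective is then $n$, and \eqref{eq:weak-duality} applies. Some care is needed if two distinct vertices have the same star as a cut. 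That happens only when $V=\{u,v\}$, which is excluded by $n\ge 3$; in that case $\partial(\{u\})=\partial(\{v\})$ would be the same element of $\mathcal C(G)$. So the lower bound holds.

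\emph{Forward direction.} Suppose $f$ is a fractional Hamiltonian cycle. Then $f$ is feasible for \eqref{LP:P} with $f(E)=n$. Combined with the lower bound, this gives $\opt(P)=n$.

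\emph{Converse.} Suppose $\opt(P)=n$ and let $x$ be an optimal solution; the LP is feasible because $G$ is connected and $x\equiv2$ works, and it is bounded below. The main point is to show that $x_e\le1$ for every $e$. Since $x(E)=n$, the summation in the lower bound is tight, so $x(E(v))=2$ for every vertex $v$. Now take an edge $e=uv$ and the cut $\partial(\{u,v\})$, which is nontrivial because $n\ge3$. We have
\[
x(\partial(\{u,v\}))=x(E(u))+x(E(v))-2x_e=4-2x_e\ge2,
\]
which gives $x_e\le1$. Hence $x$ takes values in $[0,1]$, has total weight $n$, and satisfies every cut constraint, so it is a fractional Hamiltonian cycle.

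I do not expect a real obstacle. The only subtle step is the bound $x_e\le1$: it needs the tightness of all degree constraints at an optimum together with the two-vertex cut, and both rely on the hypothesis $n\ge3$.
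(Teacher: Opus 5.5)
Your proposal is correct and follows the paper's proof essentially step for step. Summing the singleton-cut constraints gives $\opt(P)\ge n$, and a fractional Hamiltonian cycle is a feasible solution of value $n$. Conversely, at an optimum of value $n$ every vertex constraint is tight, so the two-vertex cut $\partial(\{u,v\})$ yields $4-2x_{uv}\ge 2$, i.e.\ $x_{uv}\le 1$.
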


\begin{proof}
Every feasible solution $x$ of~\eqref{LP:P} satisfies
$x(\partial(\{v\}))\ge2$ for every $v\in V$. Therefore
\begin{equation}\label{eq:P-lower}
  2x(E)=\sum_{v\in V}x(\partial(\{v\}))\ge2n,
\end{equation}
so $\opt(P)\ge n$. A fractional Hamiltonian cycle is feasible for
~\eqref{LP:P} and has total weight $n$, proving one direction.

Conversely, suppose that $\opt(P)=n$. Since the feasible region is a nonempty polyhedron and the objective has a finite optimum, the optimum is attained. Let $x$ be an
optimal solution. Equality holds in~\eqref{eq:P-lower}. Since each of the
$n$ singleton-cut terms is at least $2$, we have
$x(\partial(\{v\}))=2$ for every $v\in V$. Let $uv\in E$. Since $n\ge3$,
the set $\{u,v\}$ is nonempty and proper, so
$\partial(\{u,v\})\in\mathcal C(G)$. Hence
$2\le x(\partial(\{u,v\}))=x(\partial(\{u\}))+x(\partial(\{v\}))-2x_{uv}
=4-2x_{uv}$, and therefore $x_{uv}\le1$. Moreover, since $x$ is feasible for~\eqref{LP:P}, we have
$x(\partial(S))\ge2$ for every nonempty proper set $S\subsetneq V$. Thus $x\in[0,1]^E$, while
$x(E)=n$ and every nontrivial cut has weight at least $2$. Hence $x$ is a
fractional Hamiltonian cycle.
\end{proof}

\begin{lemma}\label{lem:singleton-dual}
The dual program~\eqref{LP:D} has a feasible solution of value $n$.
\end{lemma}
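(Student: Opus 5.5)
The natural candidate is to put weight $\tfrac12$ on each singleton cut and zero elsewhere. For each vertex $v\in V$, the set $\{v\}$ is nonempty and proper because $n\ge3$, so $\partial(\{v\})\in\mathcal C(G)$. We therefore define $y_{\partial(\{v\})}:=\tfrac12$ and set $y_F:=0$ for every other cut $F\in\mathcal C(G)$.

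One subtlety must be checked before the objective is computed. The cuts $\partial(\{u\})$ and $\partial(\{v\})$ for distinct $u,v$ must be distinct elements of $\mathcal C(G)$; otherwise two singletons would collapse to a single dual variable and the sum would be smaller. Each $F\in\mathcal C(G)$ determines a unique unordered bipartition of $V$, since $G$ is connected. So $\partial(\{u\})=\partial(\{v\})$ would force $\{\{u\},V\setminus\{u\}\}=\{\{v\},V\setminus\{v\}\}$. With $u\neq v$ this requires $\{u\}=V\setminus\{v\}$, that is, $n=2$, which contradicts $n\ge3$. Hence the $n$ singleton cuts are pairwise distinct, $y$ is well defined, and the objective value is $2\cdot n\cdot\tfrac12=n$.

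Feasibility is immediate. Clearly $y\ge0$. An edge $e=uv$ lies in a singleton cut $\partial(\{w\})$ exactly when $w\in\{u,v\}$, so
\[
\sum_{F\ni e}y_F=\tfrac12+\tfrac12=1\le 1 .
\]
Thus $y$ is feasible for~\eqref{LP:D} with value $n$, as required.

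The only step needing care is the distinctness of the singleton cuts, and it is exactly where the hypothesis $n\ge3$ and the convention of listing each cut once in $\mathcal C(G)$ enter.
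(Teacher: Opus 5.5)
Your proof is correct and follows the paper's argument exactly: weight $\tfrac12$ on each singleton cut, distinctness of the singleton cuts from connectivity and $n\ge3$, and each edge $uv$ lying in exactly the two singleton cuts $\partial(\{u\})$ and $\partial(\{v\})$. You justify the distinctness step more explicitly than the paper, which only asserts it.
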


\begin{proof}
For every $v\in V$, assign weight $1/2$ to the singleton cut
$\partial(\{v\})$, and assign weight $0$ to every other cut. The singleton
cuts are distinct because $G$ is connected and $n\ge3$. Every edge $uv$
lies in exactly the two singleton cuts $\partial(\{u\})$ and
$\partial(\{v\})$, so its total dual load is $1/2+1/2=1$. The resulting
dual vector is feasible, and its objective value is
$2\sum_{v\in V}(1/2)=n$.
\end{proof}

We next uncross an optimal dual solution. The following is a standard dual uncrossing lemma. Related submodular uncrossing ideas appear in
Edmonds~\cite{Edmonds1970} and Lov\'asz~\cite{Lovasz1976}, while the
polyhedral dual form used here was developed by Edmonds and
Giles~\cite{EdmondsGiles1977}; see also
\cite[Section~60.1]{Schrijver2003}. We include the proof for completeness.
Two cuts $F_A=\partial(A)$ and $F_B=\partial(B)$ \emph{cross} if all four
sets $A\cap B$, $A\setminus B$, $B\setminus A$, and
$V\setminus(A\cup B)$ are nonempty. A family of cuts is \emph{laminar} if no two
of its members cross. For a dual vector $y$, let
$\supp(y):=\{F\in\mathcal C(G):y_F>0\}$.

\begin{lemma}\label{lem:uncrossing}~\cite{EdmondsGiles1977,Schrijver2003}
The dual program~\eqref{LP:D} has an optimal solution $y$ for which
$\supp(y)$ is a laminar family of cuts.
\end{lemma}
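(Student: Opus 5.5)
We will use the standard uncrossing argument with a potential function. By Lemma~\ref{lem:singleton-dual} and weak duality~\eqref{eq:weak-duality}, \eqref{LP:D} is feasible and bounded. Its feasible region is a nonempty polytope: it is bounded because every cut is nonempty, so $0\le y_F\le1$. Hence the set $\mathcal O$ of optimal solutions is a nonempty compact polytope. Among all $y\in\mathcal O$, we choose one maximizing the strictly convex-type potential
\[
\Phi(y):=\sum_{F\in\mathcal C(G)}y_F\,|S_F|\,|V\setminus S_F|,
\]
where $S_F$ is either shore of $F$; the product does not depend on which shore is chosen. Since $\Phi$ is linear and $\mathcal O$ is compact, such a maximizer exists. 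We will show that $\supp(y)$ is laminar.

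Suppose, for contradiction, that $F_A=\partial(A)$ and $F_B=\partial(B)$ in $\supp(y)$ cross. Let $\varepsilon:=\min\{y_{F_A},y_{F_B}\}>0$. The key combinatorial fact is the edge-wise inequality
\[
\mathbf 1[e\in\partial(A\cap B)]+\mathbf 1[e\in\partial(A\cup B)]\le \mathbf 1[e\in\partial(A)]+\mathbf 1[e\in\partial(B)]
\]
for every edge $e$. This follows by checking the positions of the two endpoints among the four atoms $A\cap B$, $A\setminus B$, $B\setminus A$, and $V\setminus(A\cup B)$. Because the cuts cross, $A\cap B$ is nonempty, and $A\cup B$ is proper because $V\setminus(A\cup B)\neq\emptyset$. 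So both new cuts lie in $\mathcal C(G)$, using connectivity of $G$. Neither new cut equals $F_A$ or $F_B$, since its shores differ from $\{A,V\setminus A\}$ and $\{B,V\setminus B\}$; the two new cuts could coincide, and the argument tolerates that.

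We then define $y'$ by decreasing $y_{F_A}$ and $y_{F_B}$ by $\varepsilon$ and increasing $y_{\partial(A\cap B)}$ and $y_{\partial(A\cup B)}$ by $\varepsilon$. The inequality above shows that the load on every edge does not increase, so $y'$ is feasible. Since the objective $2\sum y_F$ is unchanged, $y'$ is optimal.

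The step that needs care is showing that $\Phi$ strictly increases. Let $a=|A\setminus B|$, $b=|B\setminus A|$, $c=|A\cap B|$, and $d=|V\setminus(A\cup B)|$, all at least $1$. Write $f(s)=s(n-s)$, which is strictly concave. We have $|A\cap B|=c$ and $|A\cup B|=a+b+c$, which together have the same sum as $|A|=a+c$ and $|B|=b+c$. Moreover, $c<\min(|A|,|B|)$ and $a+b+c>\max(|A|,|B|)$, so the pair $(c,a+b+c)$ is strictly more spread than $(|A|,|B|)$. For a concave $f$ this would seem to decrease $f(c)+f(a+b+c)$, not increase it. We therefore compute directly:
\[
f(a+c)+f(b+c)-f(c)-f(a+b+c)=2ab>0.
\]
So the \emph{sum} of shore products decreases under uncrossing, and we should instead choose $y\in\mathcal O$ \emph{minimizing} $\Phi$. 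The same modification then gives $\Phi(y')=\Phi(y)-2ab\varepsilon<\Phi(y)$, which contradicts minimality. Hence $\supp(y)$ is laminar, which proves Lemma~\ref{lem:uncrossing}.

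The only real subtleties are the sign convention for the potential, fixed by the computation above, and confirming that the new cuts are nontrivial and lie in $\mathcal C(G)$. Both are handled by the crossing hypothesis.
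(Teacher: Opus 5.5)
Your argument is correct and is essentially the paper's proof: the paper also chooses, among optimal dual solutions, one minimizing $\Phi(y)=\sum_{F}|S_F|\,|V\setminus S_F|\,y_F$, and shows that uncrossing a crossing pair keeps $y'$ feasible and optimal while lowering $\Phi$ by exactly $2\varepsilon|A\setminus B|\,|B\setminus A|$. In a final write-up, state the minimization from the outset instead of correcting the sign midway, and drop the ``strictly convex-type'' label, since $\Phi$ is linear in $y$.
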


\begin{proof}
Since $G$ is connected, every cut $F\in\mathcal C(G)$ is nonempty. If
$e\in F$ and $y$ is dual-feasible, then
$0\le y_F\le\sum_{R\in\mathcal C(G):e\in R}y_R\le1$. Hence the dual
feasible region is a nonempty compact polytope, and an optimal solution
exists.

For $F=\partial(S)\in\mathcal C(G)$, define
$\rho(F):=|S|\,|V\setminus S|$. This is well defined because the expression
is unchanged when $S$ is replaced by its complement. Among all optimal dual
solutions, choose $y$ minimizing
$\Phi(y):=\sum_{F\in\mathcal C(G)}\rho(F)y_F$.

Suppose that $F_A,F_B\in\supp(y)$ cross. Choose shores $A$ and $B$ such
that all four regions determined by them are nonempty, and set
$\varepsilon:=\min\{y_{F_A},y_{F_B}\}>0$. Let
$F_{\cap}:=\partial(A\cap B)$ and $F_{\cup}:=\partial(A\cup B)$; both are
nontrivial cuts. Define $y'$ by subtracting $\varepsilon$ from the
coordinates indexed by $F_A$ and $F_B$, adding $\varepsilon$ to the
coordinates indexed by $F_{\cap}$ and $F_{\cup}$, and leaving every other
coordinate unchanged.

For an edge set $R\subseteq E$, let $\e_R\in\mathbb R^E$ denote its
indicator vector, and for disjoint vertex sets $X,Y$, let $E(X,Y)$ be the
set of edges with one endpoint in $X$ and the other in $Y$. The following
identity holds coordinatewise:
\begin{equation}\label{eq:cut-uncrossing-identity}
 \e_{\partial(A)}+\e_{\partial(B)}
 -\e_{\partial(A\cap B)}-\e_{\partial(A\cup B)}
 =2\e_{E(A\setminus B,B\setminus A)}.
\end{equation}
Indeed, one checks the identity by placing the endpoints of an edge in the
four regions determined by $A$ and $B$; only an edge between
$A\setminus B$ and $B\setminus A$ contributes to the difference. In
particular,
$\e_{F_{\cap}}+\e_{F_{\cup}}\le\e_{F_A}+\e_{F_B}$ coordinatewise. Hence
the modification does not increase the load on any edge, so $y'$ is
feasible. Its objective value is unchanged because the sum of its
coordinates is unchanged.

On the other hand, a direct calculation gives
\begin{align*}
 \Phi(y')-\Phi(y)
 &=\varepsilon\bigl(\rho(F_{\cap})+\rho(F_{\cup})
   -\rho(F_A)-\rho(F_B)\bigr)\\
 &=-2\varepsilon|A\setminus B|\,|B\setminus A|<0,
\end{align*}
contradicting the choice of $y$. Therefore $\supp(y)$ is laminar.
\end{proof}

We next represent a laminar family of cuts by a weighted tree. After
identifying each cut with its unordered bipartition, this is the weighted
form of Buneman's tree representation for compatible split systems; see
\cite[pp.~388--390]{Buneman1971}. We give the rooted inclusion-tree
construction explicitly because the placement map $\alpha$ will be used
later. By a \emph{weighted tree} we mean a tree $Q$ together with a positive length
function $\ell_Q:E(Q)\to\mathbb R_{>0}$. For $q,q'\in V(Q)$, the distance
$d_Q(q,q')$ is the sum of the edge lengths on the unique $q$--$q'$ path.

\begin{lemma}\label{lem:tree}
Let $y$ be feasible for~\eqref{LP:D}, and suppose that
$\mathcal F:=\supp(y)$ is a laminar family of cuts. Then there are a
weighted tree $(Q_y,\ell_{Q_y})$ and a map
$\alpha:V(G)\to V(Q_y)$ such that
\begin{equation}\label{eq:tree-distance}
 d_{Q_y}(\alpha(u),\alpha(v))
 =\sum_{\substack{F\in\mathcal F\\F\text{ separates }u\text{ and }v}}y_F
 \qquad(u,v\in V(G)).
\end{equation}
In particular, $d_{Q_y}(\alpha(u),\alpha(v))\le1$ whenever $uv\in E(G)$.
\end{lemma}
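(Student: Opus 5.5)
We build the rooted inclusion tree of the laminar family. Fix a root vertex $r\in V(G)$. For each cut $F\in\mathcal F$, let $S_F$ be the shore of $F$ that does \emph{not} contain $r$. Then $S_F$ is nonempty and does not contain $r$. Distinct cuts give distinct shores, since a cut determines its unordered bipartition.

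The first step is to show that $\mathcal S:=\{S_F:F\in\mathcal F\}$ is a laminar family of sets: any two members are either nested or disjoint. Take $F_A,F_B\in\mathcal F$ with $A=S_{F_A}$ and $B=S_{F_B}$. Since $F_A$ and $F_B$ do not cross, one of the four regions $A\cap B$, $A\setminus B$, $B\setminus A$, $V\setminus(A\cup B)$ is empty. The last region contains $r$, so it is nonempty. Therefore $A$ and $B$ are disjoint or nested. This handling of the complement ambiguity is the only real subtlety, and fixing the root makes it routine.

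Next we define the tree $Q_y$. Its vertex set is $\mathcal S\cup\{V\}$, with $V$ as the root. The parent of $S\in\mathcal S$ is the inclusion-minimal member of $\mathcal S\cup\{V\}$ that strictly contains $S$. This parent is unique by laminarity, since the strict supersets of $S$ form a chain. The edge from $S_F$ to its parent gets length $y_F>0$.

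For $u\in V$, let $\alpha(u)$ be the inclusion-minimal member of $\mathcal S\cup\{V\}$ containing $u$. It is unique, again because the members containing $u$ form a chain. The path from $\alpha(u)$ to the root passes through exactly the sets $S\in\mathcal S$ that contain $u$, so
\[
d_{Q_y}(\alpha(u),V)=\sum_{F\in\mathcal F:\,u\in S_F}y_F .
\]

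Now we prove the distance formula~\eqref{eq:tree-distance}. Let $w$ be the lowest common ancestor of $\alpha(u)$ and $\alpha(v)$. The ancestors of $w$, counted without $w$ itself, are exactly the sets $S_F$ containing both $u$ and $v$, and above $w$ the two root paths coincide. Consequently,
\[
d_{Q_y}(\alpha(u),\alpha(v))=\sum_{F:\,u\in S_F,\,v\notin S_F}y_F+\sum_{F:\,v\in S_F,\,u\notin S_F}y_F .
\]
A cut $F$ separates $u$ and $v$ exactly when precisely one of them lies in $S_F$. This gives~\eqref{eq:tree-distance}.

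Finally, suppose $uv\in E(G)$. The cuts separating $u$ and $v$ are exactly the cuts $F\in\mathcal F$ with $uv\in F$. By the dual constraint of~\eqref{LP:D} for the edge $uv$, their total weight is at most $1$, which gives the last claim.
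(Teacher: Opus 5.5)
Your proof is correct and uses the same construction as the paper: the inclusion tree of the shores avoiding a fixed root $r$, with $\alpha(u)$ the minimal member containing $u$. The only difference is that you verify \eqref{eq:tree-distance} through root distances and the lowest common ancestor, whereas the paper observes directly that the edge $\{S,p(S)\}$ lies on the $\alpha(u)$--$\alpha(v)$ path exactly when $\partial(S)$ separates $u$ and $v$. One wording slip: the sets $S_F$ containing both $u$ and $v$ are $w$ together with its ancestors other than $V$, not the proper ancestors of $w$. That corrected description is what gives $d_{Q_y}(w,V)=\sum_{F:\,u,v\in S_F}y_F$ and hence your displayed formula.
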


\begin{proof}
Fix an arbitrary root vertex $r\in V$. For every $F\in\mathcal F$, let
$S_F$ be the unique shore of $F$ that does not contain $r$, and let
$\mathcal L:=\{S_F:F\in\mathcal F\}$. Then $\mathcal L$ is a laminar set
family in the usual sense. Indeed, if two members $S_F,S_{F'}$ intersect
but neither contains the other, then $S_F\cap S_{F'}$,
$S_F\setminus S_{F'}$, and $S_{F'}\setminus S_F$ are nonempty, while
$V\setminus(S_F\cup S_{F'})$ contains $r$. The cuts $F$ and $F'$ would
therefore cross, a contradiction.

Define $p:\mathcal L\to\mathcal L\cup\{V\}$ by letting $p(S)$ be the
inclusion-minimal member of $\mathcal L\cup\{V\}$ that properly contains
$S$. Such a member exists and is unique because the proper supersets of $S$
in a finite laminar family form a chain. Define
\begin{equation}\label{eq:Q-definition}
 V(Q_y):=\mathcal L\cup\{V\},\qquad
 E(Q_y):=\bigl\{\{S,p(S)\}:S\in\mathcal L\bigr\}.
\end{equation}
Here the symbol $V$ on the right denotes the whole ground set, regarded as a
vertex of $Q_y$. Define the edge-length function
$\ell_{Q_y}:E(Q_y)\to\mathbb R_{>0}$ by
$\ell_{Q_y}(\{S,p(S)\}):=y_{\partial(S)}$ for every $S\in\mathcal L$.
Repeatedly following parent edges strictly increases the corresponding sets
and eventually reaches $V$, so $Q_y$ is connected. It has
$|\mathcal L|+1$ vertices and $|\mathcal L|$ edges, and hence is a tree.
All edge lengths are positive because the corresponding cuts belong to
$\supp(y)$.

For $v\in V(G)$, define $\alpha(v)$ to be the inclusion-minimal member of
$\mathcal L\cup\{V\}$ containing $v$. This is well defined because the
members containing $v$ form a nonempty chain. The map $\alpha$ need not be
injective.

For $S\in\mathcal L$, remove the tree edge
$e_S:=\{S,p(S)\}$. The component of $Q_y-e_S$ containing the node $S$
consists exactly of the members of $\mathcal L$ contained in $S$. It follows
that $\alpha(v)$ lies in this component if and only if $v\in S$. Therefore
$e_S$ lies on the unique $\alpha(u)$--$\alpha(v)$ path exactly when the cut
$\partial(S)$ separates $u$ and $v$. Summing the lengths
$y_{\partial(S)}$ of these path edges proves~\eqref{eq:tree-distance}.

If $uv\in E(G)$, then a cut $F\in\mathcal F$ separates $u$ and $v$ exactly
when $uv\in F$. Thus the right-hand side of~\eqref{eq:tree-distance} is the
dual load on $uv$, which is at most $1$ by~\eqref{LP:D}.
\end{proof}

We remark that the root vertex in the preceding proof is used only to choose one shore of
each cut and thereby write the inclusion tree. The distance in~\eqref{eq:tree-distance} is expressed solely in terms of the cuts and their
weights, and hence is independent of that choice.

Here is a small example of the construction, independent of dual
feasibility. Let $V=\{r,a,b,c,d\}$ and use $r$ as the root vertex. Let
$\mathcal L$ consist of $\{a\}$, $\{b\}$, $\{a,b\}$, $\{c\}$, and
$\{c,d\}$, with respective weights $1/5$, $3/10$, $2/5$, $1/10$, and
$1/2$. The resulting tree is shown in Figure~\ref{fig:laminar-tree}. We
have $\alpha(a)=\{a\}$, $\alpha(b)=\{b\}$, and
$\alpha(c)=\{c\}$. Also, $\alpha(d)=\{c,d\}$ and $\alpha(r)=V$.
For example, the path from $\alpha(a)$ to $\alpha(d)$ has length
$1/5+2/5+1/2=11/10$. This is the sum of the weights of the three cuts
represented by $\{a\}$, $\{a,b\}$, and $\{c,d\}$ that separate $a$
and $d$.

\begin{figure}[ht]
\centering
\begin{tikzpicture}[
  setnode/.style={draw,rounded corners,minimum width=1.45cm,
    minimum height=0.75cm,inner sep=2pt,font=\small,align=center},
  edgelabel/.style={fill=white,draw=none,inner sep=1pt,font=\scriptsize}
]
\node[setnode] (root) at (0,3.0) {$V$\\[-1mm]{\scriptsize $\alpha(r)$}};
\node[setnode] (ab) at (-2.3,1.55) {$\{a,b\}$};
\node[setnode] (cd) at (2.3,1.55) {$\{c,d\}$\\[-1mm]{\scriptsize $\alpha(d)$}};
\node[setnode] (aa) at (-3.45,0) {$\{a\}$\\[-1mm]{\scriptsize $\alpha(a)$}};
\node[setnode] (bb) at (-1.15,0) {$\{b\}$\\[-1mm]{\scriptsize $\alpha(b)$}};
\node[setnode] (cc) at (2.3,0) {$\{c\}$\\[-1mm]{\scriptsize $\alpha(c)$}};

\draw (root)--node[edgelabel,above left] {$\frac25$} (ab);
\draw (root)--node[edgelabel,above right] {$\frac12$} (cd);
\draw (ab)--node[edgelabel,above left] {$\frac15$} (aa);
\draw (ab)--node[edgelabel,above right] {$\frac3{10}$} (bb);
\draw (cd)--node[edgelabel,right] {$\frac1{10}$} (cc);
\end{tikzpicture}
\caption{The weighted inclusion tree associated with an illustrative
laminar family. The vertices of the tree are sets, while the original
vertices are placed by the map $\alpha$.}
\label{fig:laminar-tree}
\end{figure}

Using the weighted tree $Q_y$ and the map $\alpha$, define a graph $H_y$ by
\begin{equation}\label{eq:H-definition}
 V(H_y):=V(G),\qquad
 E(H_y):=\bigl\{uv\in\tbinom{V(G)}2:
 d_{Q_y}(\alpha(u),\alpha(v))\le1\bigr\}.
\end{equation}
Thus $H_y$ is a simple graph on the same vertex set as $G$.

Viewing the geometric realization of $Q_y$ as an $\mathbb R$-tree, the graph $H_y$ is a unit ball graph and hence strongly chordal by Kuroda and Tsujie~\cite[Theorem~1.9]{KurodaTsujie2021}. For completeness,
we give a direct proof of chordality using Gavril's characterization.

\begin{lemma}\label{lem:chordal}
The graph $H_y$ is chordal and contains $G$ as a spanning subgraph.
\end{lemma}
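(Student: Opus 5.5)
The spanning-subgraph claim follows at once from the last assertion of Lemma~\ref{lem:tree}. If $uv\in E(G)$, then $d_{Q_y}(\alpha(u),\alpha(v))\le1$, so $uv\in E(H_y)$ by~\eqref{eq:H-definition}. Since $V(H_y)=V(G)$, $G$ is a spanning subgraph of $H_y$. The substance of the lemma is chordality, which I will prove via Gavril's characterization~\cite{Gavril1974}: I will exhibit a tree $Q'$ and, for each $v\in V(G)$, a subtree $T_v\subseteq Q'$ such that $uv\in E(H_y)$ if and only if $T_u\cap T_v\neq\emptyset$.

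\textbf{Construction.} Let $Q'$ be the tree obtained from $Q_y$ by subdividing edges so that every point at half-integer-relevant positions becomes a vertex. Concretely, take the geometric realization of $Q_y$ as a metric tree. Let $T_v$ be the closed ball of radius $1/2$ around the point $\alpha(v)$. A closed ball in a metric tree is connected, hence a subtree. To make this finite and combinatorial, I subdivide each edge of $Q_y$ at every point whose distance to some $\alpha(w)$ equals exactly $1/2$. There are finitely many such points, so we obtain a finite tree $Q'$ whose vertices include all ball boundaries. Then $T_v$ is the subtree of $Q'$ induced by the vertices within distance $1/2$ of $\alpha(v)$. It is connected because the tree path from $\alpha(v)$ to any point of the ball stays in the ball, distances being monotone along geodesics.

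\textbf{Intersection property.} Two closed balls of radius $1/2$ in a metric tree intersect if and only if the distance between their centers is at most $1$. If $d(\alpha(u),\alpha(v))\le1$, the midpoint of the geodesic lies in both balls. Conversely, a common point $z$ gives $d(\alpha(u),\alpha(v))\le d(\alpha(u),z)+d(z,\alpha(v))\le 1$ by the triangle inequality. After discretization I must check that the combinatorial subtrees still intersect exactly when the balls do. Because the balls are closed, each intersection is a closed sub-segment-union. If the balls meet only at a single point, that point is the midpoint, at distance exactly $1/2$ from both centers, hence a subdivision vertex. Otherwise the intersection contains a nondegenerate segment whose endpoints are ball boundaries, and these are vertices of $Q'$. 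Either way the intersection contains a vertex of $Q'$.

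\textbf{Main obstacle.} The main obstacle is this discretization bookkeeping, namely ensuring that each intersection of two balls contains a vertex of $Q'$. An alternative that avoids the issue entirely is to use the continuous version of Gavril's theorem: subtrees of an $\mathbb R$-tree with finitely many branch points still yield a chordal intersection graph. For that, I would argue directly that a chordless cycle $v_1\dots v_k$ ($k\ge4$) of pairwise-intersecting consecutive balls forces, by the Helly property for subtrees of a tree, a common point of three balls and hence a chord. I would present the subdivision argument, since it reduces cleanly to the cited characterization.
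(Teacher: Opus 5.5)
Your proof is correct and follows essentially the same route as the paper: the spanning claim from the last assertion of Lemma~\ref{lem:tree}, then Gavril's theorem applied to the radius-$\tfrac12$ balls around the points $\alpha(v)$ in a subdivided copy of $Q_y$, with the midpoint and the triangle inequality giving the intersection property. The only difference is the discretization. The paper subdivides at the midpoints $m_{uv}$ of all pairs with $d_{Q_y}(\alpha(u),\alpha(v))\le1$, so $T_u$ and $T_v$ share a vertex immediately. Your subdivision at points at distance exactly $\tfrac12$ from some $\alpha(w)$ instead needs your extra bookkeeping step, which does go through: when $d_{Q_y}(\alpha(u),\alpha(v))\le\tfrac12$ the endpoints of the common segment are the centers rather than ball boundaries, but these are vertices of $Q_y$ anyway.
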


\begin{proof}
If $uv\in E(G)$, then Lemma~\ref{lem:tree} gives
$d_{Q_y}(\alpha(u),\alpha(v))\le1$, so $uv\in E(H_y)$. Hence $G$ is a
spanning subgraph of $H_y$.

We show that $H_y$ is the intersection graph of a family of subtrees of a
finite tree. For each pair of distinct vertices $u,v\in V(G)$ satisfying
$d_{Q_y}(\alpha(u),\alpha(v))\le1$, let $m_{uv}$ be the midpoint of the
unique $\alpha(u)$--$\alpha(v)$ path in $Q_y$, where path length is measured
using the edge lengths of $Q_y$. If $m_{uv}$ lies in the interior of an
edge, subdivide that edge at $m_{uv}$. Performing these subdivisions for all
such pairs produces a finite tree $\widehat Q_y$. Give each new edge the
length of the corresponding portion of the original edge, so that
subdivision does not change path lengths.

For each $v\in V(G)$, let $T_v$ be the subgraph of $\widehat Q_y$ induced
by the vertices $z$ satisfying
\[
 d_{\widehat Q_y}(\alpha(v),z)\le\frac12.
\]
The subgraph $T_v$ is connected. Indeed, if $z\in V(T_v)$, then every
vertex on the unique $\alpha(v)$--$z$ path is no farther from $\alpha(v)$
than $z$ is, and hence also belongs to $T_v$. Thus $T_v$ is a subtree of
$\widehat Q_y$.

We claim that, for distinct $u,v\in V(G)$,
\[
 T_u\cap T_v\neq\emptyset
 \quad\Longleftrightarrow\quad
 d_{Q_y}(\alpha(u),\alpha(v))\le1.
\]
Suppose first that
$d_{Q_y}(\alpha(u),\alpha(v))\le1$. The midpoint $m_{uv}$ is a vertex of
$\widehat Q_y$ and satisfies
\[
 d_{\widehat Q_y}(\alpha(u),m_{uv})
 =d_{\widehat Q_y}(\alpha(v),m_{uv})
 =\frac12d_{Q_y}(\alpha(u),\alpha(v))
 \le\frac12.
\]
Hence $m_{uv}\in V(T_u)\cap V(T_v)$.

Conversely, suppose that $z\in V(T_u)\cap V(T_v)$. Since subdivision does
not change path lengths, the unique paths in $\widehat Q_y$ give
\[
 d_{Q_y}(\alpha(u),\alpha(v))
 \le d_{\widehat Q_y}(\alpha(u),z)
    +d_{\widehat Q_y}(z,\alpha(v))
 \le1.
\]
This proves the claim. By the definition of $H_y$, it follows that $H_y$ is
the intersection graph of the indexed family
$(T_v)_{v\in V(G)}$ of subtrees of the host tree $\widehat Q_y$. Therefore
$H_y$ is chordal by Gavril's theorem~\cite{Gavril1974}.
\end{proof}

We are now ready to prove Theorem~\ref{thm:transfer}, which we restate here
for convenience.

\transferTheorem*

\begin{proof}
Let $G=(V,E)$ be a connected graph, where $n:=|V|\ge3$. The primal
program~\eqref{LP:P} is feasible: assigning $x_e:=2$ for every edge
satisfies every cut constraint. Its objective is bounded below by $n$
by~\eqref{eq:P-lower}. The dual is feasible by taking $y=0$, so strong
duality applies.

By Lemma~\ref{lem:uncrossing}, choose an optimal dual solution $y$ whose
positive support $\mathcal F:=\supp(y)$ is a laminar family of cuts.
Construct the weighted tree $Q_y$, the map $\alpha$, and the graph $H_y$ as
in Lemma~\ref{lem:tree} and~\eqref{eq:H-definition}. By
Lemma~\ref{lem:chordal}, $H_y$ is a chordal spanning supergraph of $G$.

It remains to show that if $H_y$ is Hamiltonian, then $G$ is fractionally
Hamiltonian. Suppose that $H_y$ is Hamiltonian, and let
$C=v_1v_2\cdots v_nv_1$ be a Hamiltonian cycle, where indices are read
modulo $n$. Every edge $v_iv_{i+1}$ of $C$ belongs to $H_y$, and hence
$d_{Q_y}(\alpha(v_i),\alpha(v_{i+1}))\le1$. Summing around the cycle gives
\begin{equation}\label{eq:cycle-upper}
 \sum_{i=1}^{n}d_{Q_y}(\alpha(v_i),\alpha(v_{i+1}))\le n.
\end{equation}
For $F\in\mathcal F$, let $\partial_C(F)$ be the set of edges of $C$ whose
endpoints lie in different shores of $F$. Applying~\eqref{eq:tree-distance}
to every cycle edge and reversing the order of the two finite sums yields
\begin{align}
 \sum_{i=1}^{n}d_{Q_y}(\alpha(v_i),\alpha(v_{i+1}))
 &=\sum_{i=1}^{n}
   \sum_{\substack{F\in\mathcal F\\
   F\text{ separates }v_i\text{ and }v_{i+1}}}y_F \notag\\
 &=\sum_{F\in\mathcal F}y_F|\partial_C(F)|.
 \label{eq:cycle-expand}
\end{align}
Both shores of every $F\in\mathcal F$ are nonempty. While traversing a
Hamiltonian cycle, every passage from one shore to the other must be followed
by a passage back. Hence $|\partial_C(F)|$ is a positive even integer and is
at least $2$. Combining~\eqref{eq:cycle-upper} and
~\eqref{eq:cycle-expand} gives
\[
 n\ge\sum_{F\in\mathcal F}y_F|\partial_C(F)|
   \ge2\sum_{F\in\mathcal F}y_F=\opt(D).
\]
Lemma~\ref{lem:singleton-dual} gives $\opt(D)\ge n$, so $\opt(D)=n$.
Strong duality yields $\opt(P)=n$, and
Lemma~\ref{lem:LP-characterization} implies that $G$ is fractionally
Hamiltonian. Consequently, if $G$ is not fractionally Hamiltonian, then
$H_y$ is a non-Hamiltonian chordal spanning supergraph of $G$.
\end{proof}

We now apply Theorem~\ref{thm:transfer} to the theorem of Kabela and Kaiser.

\tentoughFractional*

\begin{proof}
Let $G$ be a $10$-tough graph on at least three vertices. Suppose, for a
contradiction, that $G$ is not fractionally Hamiltonian. By
Theorem~\ref{thm:transfer}, $G$ has a non-Hamiltonian chordal spanning
supergraph $H$. Since adding edges can only merge components, $H$ is
$10$-tough. Hence $H$ is Hamiltonian by the theorem of Kabela and Kaiser
\cite[Theorem~2]{KabelaKaiser2017}, a contradiction.
\end{proof}

\section{Fractional Hamiltonicity and resistance positivity}
\label{sec:resistance}

For $F\subseteq E(G)$, let $\mathbf e_F\in\mathbb R^{E(G)}$ be its
indicator vector. The \emph{spanning-tree polytope} of $G$ is
$P(G):=\operatorname{conv}\{\mathbf e_T:T\in\mathcal T(G)\}$.
We use the following graphic-matroid case of Edmonds' base-polytope
description; see also~\cite[Chapter~50]{Schrijver2003}.

\begin{theorem}[Edmonds~\cite{Edmonds1970}]
\label{thm:tree-polytope}
Let $G=(V,E)$ be connected. Then $P(G)$ is the set of all
$x=(x_e)_{e\in E}\in\mathbb R^E$ satisfying:
\begin{enumerate}[(1)]
    \item $x_e\ge0$ for every $e\in E$;
    \item $x(E)=|V|-1$;
    \item $x(E[S])\le |S|-1$ for every nonempty proper set
    $S\subsetneq V$, where $E[S]$ is the set of edges with both endpoints
    in $S$.
\end{enumerate}
\end{theorem}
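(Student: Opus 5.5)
We prove the two inclusions separately. Write $Q(G)$ for the set of vectors satisfying (1)--(3).

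\textbf{The inclusion $P(G)\subseteq Q(G)$.} This is direct. For a spanning tree $T$, the vector $\e_T$ is nonnegative and has $|V|-1$ ones. For every nonempty proper $S$, the edges of $T$ inside $S$ form a forest on $S$, so there are at most $|S|-1$ of them. Since $Q(G)$ is convex, it contains $\operatorname{conv}\{\e_T\}$.

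\textbf{Reduction for the reverse inclusion.} First note that $Q(G)$ is bounded. For an edge $e=uv$, take $S=\{u,v\}$ in (3), which gives $x_e\le1$ when $|V|\ge3$; when $|V|=2$, (2) gives $x_e=1$ directly. So both $P(G)$ and $Q(G)$ are polytopes. It therefore suffices to show that, for every weight vector $w\in\mathbb R^E$,
\[
\max\{w^{\mathsf T}x:x\in Q(G)\}\le\max\{w(T):T\in\mathcal T(G)\}.
\]
Indeed, a polytope strictly larger than $P(G)$ would have a vertex outside $P(G)$. That vertex could be separated from $P(G)$ by some linear functional, and this would contradict the displayed inequality for that $w$.

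\textbf{Duality via the greedy algorithm.} We certify the bound with LP duality and Kruskal's algorithm.
\begin{enumerate}[(i)]
\item Order the edges as $e_1,\dots,e_m$ with $w_{e_1}\ge\cdots\ge w_{e_m}$.
\item For each $i$, let $V_{i,1},\dots$ be the vertex sets of the components of $(V,\{e_1,\dots,e_i\})$ with at least two vertices. These sets, over all $i$, form a laminar chain-of-partitions structure.
\item Let $T$ be the tree produced by greedy Kruskal, which scans $e_1,\dots,e_m$ and keeps each edge that does not close a cycle.
\item Define dual multipliers on the sets $S$ in (3) and on $S=V$ in (2). Each component set arising at step $i$ gets $z_S:=w_{e_i}-w_{e_{i+1}}\ge0$, with the convention $w_{e_{m+1}}:=0$. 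The set $V$ gets the corresponding possibly negative multiplier; it is free because (2) is an equality constraint.
\item Check dual feasibility by telescoping. For every edge $e_j$, we have $\sum_{S\ni e_j}z_S\ge w_{e_j}$, because $e_j$ lies inside a component at every step $i\ge j$. Negative weights need a little care: we may either run greedy only over all edges, keeping the nonnegativity multipliers of (1) as slack, or shift $w$ by a constant. Shifting is harmless because $x(E)$ is fixed by (2).
\item Check that the dual objective $\sum_S z_S(|S|-1)$ equals $w(T)$. This holds because the number of greedy tree edges inside each component at step $i$ is exactly $|S|-1$.
\end{enumerate}
Weak duality then gives $w^{\mathsf T}x\le w(T)$ for every $x\in Q(G)$, which is the displayed inequality.

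\textbf{Main obstacle.} The delicate step is the dual bookkeeping in (iv)--(vi). The components at each threshold must be used correctly as the sets $S$. The complementary-slackness identity $|T\cap E[S]|=|S|-1$ must be verified for every such component. The sign issue must be handled: either shift $w$ so that all weights are positive, or treat the whole vertex set $V$ through the equality constraint (2).

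If this bookkeeping becomes awkward, a fallback is the uncrossing argument of Lemma~\ref{lem:uncrossing}, applied to tight sets at a vertex $x$ of $Q(G)$. The function $S\mapsto|S|-1$ is supermodular-compatible on intersecting pairs, so tight sets can be taken laminar. A laminar family has linearly independent incidence vectors only if the support of $x$ is small. Combined with a fractional edge, this yields a contradiction via a counting argument, which shows $x$ is integral and hence equals $\e_T$ for some spanning tree $T$.
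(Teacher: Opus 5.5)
\textbf{Comparison with the paper.} The paper gives no proof of this theorem. It quotes the result from Edmonds, with a pointer to Chapter~50 of Schrijver. It uses only the nontrivial direction: that $x=\frac{n-1}{n}f$, which satisfies (1)--(3), lies in $P(G)$.

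Your proof is correct. It is the classical greedy/LP-duality argument, essentially Edmonds' own. The Kruskal-generated dual solution plus weak duality gives $w^{\mathsf T}x\le w(T)$ on $Q(G)$, and separation finishes. Compared with the citation, you gain a self-contained proof of exactly the direction the paper needs, at the cost of some bookkeeping. You need neither strong duality nor any integrality theory.

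\textbf{Points to tighten.}
\begin{enumerate}[(a)]
\item A vertex set can be a component at many consecutive steps. So $z_S$ must be the \emph{sum} of $w_{e_i}-w_{e_{i+1}}$ over all steps $i$ at which $S$ is a component. With a literal assignment ``$:=$'', the telescoping in (v) fails.
\item The sign discussion in (v) is unnecessary. For $i<m$, $w_{e_i}-w_{e_{i+1}}\ge0$ whatever the signs. At step $m$ the only component is $V$, since $G$ is connected. So the one possibly negative term falls on the free multiplier of (2), and no shifting is needed.
\item In (v), the extra terms come from steps $i<j$ at which the endpoints of $e_j$ are already joined. These are nonnegative differences of the same kind, which is exactly why you get $\ge$ rather than $=$.
\item In (vi), the cleanest justification goes through the components $S$ of $(V,\{e_1,\dots,e_i\})$. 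With $c_i$ the number of components, $\sum_S(|S|-1)=|V|-c_i=|T\cap\{e_1,\dots,e_i\}|$. Summation by parts then gives $\sum_S z_S(|S|-1)=\sum_{e_j\in T}w_{e_j}$.
\item If you phrase (vi) via complementary slackness instead, you need two facts. The first is $|T\cap E[S]|=|S|-1$. The second is dual tightness $\sum_{S\ni e_j}z_S=w_{e_j}$ at every tree edge. Tightness holds because a Kruskal edge has its endpoints in different components at every step $i<j$. The first identity alone yields only $\sum_S z_S(|S|-1)\ge w(T)$, which is the wrong direction for your bound.
\end{enumerate}

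The uncrossing fallback is not needed. If you keep it, the relevant facts are:
\begin{enumerate}[(i)]
\item supermodularity of $S\mapsto x(E[S])$ for $x\ge0$;
\item modularity of $|S|-1$ on intersecting pairs;
\item the fact that a laminar family of sets of size at least two on $n$ points has at most $n-1$ members.
\end{enumerate}
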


The following lemma immediately follows from Devriendt's
characterization~\cite[Theorem~3.8]{Devriendt2026} and we include its proof here for completeness.

\begin{lemma}\label{lem:strict-marginal-RP}
Let $G=(V,E)$ be connected. If there exists $x\in P(G)$ such that
$x(E(v))<2$ for every $v\in V$, then $G$ is RP.
\end{lemma}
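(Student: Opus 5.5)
The plan is to turn the point $x\in P(G)$ into a \emph{positive} distribution $\mu$ on $\mathcal T(G)$ with $\mathbb E_\mu[\deg_T(v)]<2$ for every $v$, and then invoke Theorem~\ref{thm:devriendt-characterization}. The first step uses the definition $P(G)=\operatorname{conv}\{\mathbf e_T:T\in\mathcal T(G)\}$. This lets us write $x=\sum_{T}\lambda_T\mathbf e_T$ with $\lambda_T\ge0$ and $\sum_T\lambda_T=1$, so $\lambda$ is a probability distribution $\nu$ on $\mathcal T(G)$. For every vertex $v$,
\[
\mathbb E_\nu[\deg_T(v)]=\sum_T\lambda_T\,\mathbf e_T(E(v))=x(E(v))<2 .
\]
This distribution need not be positive, since many trees may receive weight $0$.

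The second step repairs positivity by mixing with the uniform distribution $u$ on $\mathcal T(G)$, which is nonempty because $G$ is connected. For $\delta\in(0,1)$, set $\mu_\delta:=(1-\delta)\nu+\delta u$. Every tree then has probability at least $\delta/|\mathcal T(G)|>0$, so $\mu_\delta$ is positive. Expected degrees are affine in the distribution, so
\[
\mathbb E_{\mu_\delta}[\deg_T(v)]=(1-\delta)x(E(v))+\delta\,\mathbb E_u[\deg_T(v)]\le(1-\delta)x(E(v))+\delta\,\deg_G(v).
\]
Let $\eta:=\min_v(2-x(E(v)))>0$; this minimum is over finitely many vertices. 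Let $\Delta$ be the maximum degree of $G$. Then every expected degree is at most $2-\eta+\delta\Delta$, and choosing $0<\delta<\eta/\Delta$ makes it strictly less than $2$ at every vertex. Theorem~\ref{thm:devriendt-characterization} then gives that $G$ is RP.

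The only point needing care is the second step. The decomposition from the first step may give zero weight to some trees, and the strict inequality must survive the perturbation. This is why finiteness of $V$ matters: it guarantees a uniform slack $\eta>0$. Theorem~\ref{thm:tree-polytope} is not needed for this lemma; presumably it is used later to verify that a fractional Hamiltonian cycle, suitably rescaled, lies in $P(G)$.
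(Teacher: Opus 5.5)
Your proposal is correct and is essentially the paper's proof. The paper also writes $x$ as the edge-marginal of a distribution on $\mathcal T(G)$, mixes it with the uniform distribution with a small positive weight, and invokes Theorem~\ref{thm:devriendt-characterization}; your choice $0<\delta<\eta/\Delta$ makes the paper's ``sufficiently small'' explicit (when $\Delta=0$, i.e.\ the one-vertex graph, any $\delta$ works).
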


\begin{proof}
Since $x\in P(G)$, there is a distribution $\mu_0$ on $\cT(G)$ whose
edge-marginal vector is $x$. Thus
$\E_{\mu_0}[\deg_T(v)]=x(E(v))<2$ for every $v\in V$.
Let $\mu_1$ be the uniform distribution on $\cT(G)$ and, for
$0<\eta<1$, let $\mu_\eta:=(1-\eta)\mu_0+\eta\mu_1$. For every
sufficiently small $\eta>0$, the distribution $\mu_\eta$ is positive and
$\E_{\mu_\eta}[\deg_T(v)]<2$ for every $v\in V$. The result follows from
Theorem~\ref{thm:devriendt-characterization}.
\end{proof}

We now prove Theorem~\ref{thm:fractional-Hamiltonian-RP}, restated here for
convenience.

\fractionalHamiltonianRP*

\begin{proof}
Let $G=(V,E)$ be an $n$-vertex fractionally Hamiltonian graph, and let $f$
be a fractional Hamiltonian cycle of $G$. The singleton cut inequalities give
$f(E(v))\ge2$ for every $v\in V$. Since
$\sum_{v\in V}f(E(v))=2f(E)=2n$, equality holds at every vertex, so
$f(E(v))=2$ for all $v\in V$.

Set $x:=\frac{n-1}{n}f$. We verify Edmonds' inequalities for the
spanning-tree polytope. Clearly $x_e\ge0$ for every $e\in E$, and
$x(E)=n-1$. 

Let $\emptyset\neq S\subsetneq V$. Counting the weights incident with
the vertices of $S$ gives
\[
 2|S|=\sum_{v\in S}f(E(v))=2f(E[S])+f(\partial(S)).
\]
Hence $f(E[S])=|S|-\frac12f(\partial(S))\le |S|-1$, and therefore
$x(E[S])\le\frac{n-1}{n}(|S|-1)\le |S|-1$. By
Theorem~\ref{thm:tree-polytope}, $x\in P(G)$. Finally, for every $v\in V$,
$x(E(v))=\frac{n-1}{n}f(E(v))=2-\frac2n<2$. Therefore
Lemma~\ref{lem:strict-marginal-RP} implies that $G$ is RP.
\end{proof}

We remark that $H_3$ is RP but not fractionally Hamiltonian. Recall that
$H_m$ is the graph obtained from the complete graph on
$\{a,b_1,\ldots,b_m\}$ by subdividing each edge $ab_i$ once, with
subdivision vertex $x_i$. 
Let $T_0:=\{a x_i,x_i b_i:i\in[3]\}$. Assign probability $2/5$ to $T_0$ and
probability $1/15$ to each of the nine spanning trees
$\{x_i b_i:i\in[3]\}\cup\{a x_k\}\cup R$, where $k\in[3]$ and $R$ is a
spanning tree of the clique on $\{b_1,b_2,b_3\}$. The resulting
edge-marginal vector $x$ satisfies
$x(E(a))=x(E(b_i))=9/5$ and $x(E(x_i))=8/5$ for every $i\in[3]$.
Thus $H_3$ is RP by Lemma~\ref{lem:strict-marginal-RP}. On the other hand,
in any fractional Hamiltonian cycle of $H_3$, the two edges incident with
each degree-two vertex $x_i$ would both have weight $1$, giving total
incident weight $3$ at $a$, contrary to
$f(E(a))=2$.

We now deduce the resistance-curvature consequence of the main theorem.

\tentoughRP*

\begin{proof}
Let $G$ be a $10$-tough graph. If $|V(G)|\le2$, then $G$ is complete and
the result follows directly from
Theorem~\ref{thm:devriendt-characterization}. If $|V(G)|\ge3$, then
Theorem~\ref{thm:10-tough-fractional} shows that $G$ is fractionally
Hamiltonian, and Theorem~\ref{thm:fractional-Hamiltonian-RP} implies that
$G$ is RP.
\end{proof}

\section{Non-RN graphs with toughness approaching
\texorpdfstring{$3/2$}{3/2}}
\label{sec:construction}

We begin with a weighted obstruction that will be used for the graphs
$J_{m,\ell}$ defined in the introduction. The proof is similar to the argument in
\cite[Theorem~4]{AgrahariBibbyBorosGarciaHeidercheidtWang2026}.

\begin{lemma}\label{lem:weighted-obstruction}
Let $G=(V,E)$ be connected, and let
$\lambda:V\to\mathbb R_{\ge0}$ be nonzero. Suppose that
\[
 \sum_{v\in V}\lambda(v)\deg_T(v)\ge
 2\sum_{v\in V}\lambda(v)
\]
for every $T\in\cT(G)$. Then $G$ is
not RP. If this inequality is strict for at least one spanning tree, then
$G$ is not RN.
\end{lemma}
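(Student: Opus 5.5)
We prove the lemma by contradiction through Devriendt's characterization, Theorem~\ref{thm:devriendt-characterization}. The idea is to average the hypothesized inequality over the tree distribution that the characterization provides, and compare the result with the per-vertex bounds on expected degree.

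\emph{Not RP.} Suppose, for a contradiction, that $G$ is RP. Then there is a positive distribution $\mu$ on $\cT(G)$ such that $\E_\mu[\deg_T(v)]<2$ for every $v\in V$. Since $\lambda\ge0$ and $\lambda$ is nonzero, there is at least one vertex $w$ with $\lambda(w)>0$. Multiplying the vertex bounds by $\lambda(v)$ and summing gives
\[
 \E_\mu\Bigl[\sum_{v\in V}\lambda(v)\deg_T(v)\Bigr]
 =\sum_{v\in V}\lambda(v)\E_\mu[\deg_T(v)]
 <2\sum_{v\in V}\lambda(v).
\]
The inequality is strict because the term at $w$ is strictly smaller and no other term increases. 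On the other hand, the hypothesis holds pointwise for every spanning tree $T$. Taking expectations under $\mu$ therefore gives
\[
 \E_\mu\Bigl[\sum_{v\in V}\lambda(v)\deg_T(v)\Bigr]\ge 2\sum_{v\in V}\lambda(v).
\]
These two bounds contradict each other, so $G$ is not RP.

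\emph{Not RN.} Now assume the hypothesized inequality is strict for some tree $T_1$, and suppose, for a contradiction, that $G$ is RN. Theorem~\ref{thm:devriendt-characterization} gives a positive distribution $\mu$ with $\E_\mu[\deg_T(v)]\le2$ for every $v\in V$. Weighting by $\lambda$ and summing gives
\[
 \E_\mu\Bigl[\sum_{v\in V}\lambda(v)\deg_T(v)\Bigr]\le 2\sum_{v\in V}\lambda(v).
\]
Every term of the expectation is a tree's value of $\sum_{v}\lambda(v)\deg_T(v)$, which is at least $2\sum_v\lambda(v)$ by hypothesis. Because $\mu$ is positive, $\mu(T_1)>0$, and $T_1$ contributes a strictly larger value. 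Hence the expectation is strictly greater than $2\sum_v\lambda(v)$, a contradiction. So $G$ is not RN.

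The only delicate point is identifying where strictness comes from in each case. In the RP case it comes from the vertex bounds together with $\lambda\neq0$. In the RN case it comes from positivity of $\mu$ applied to the tree $T_1$, which is exactly why the characterization requires a positive distribution. Beyond this, the argument is a direct averaging.
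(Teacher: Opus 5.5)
Your proof is correct and follows the paper's argument essentially step for step. Both directions average the tree inequality against the distribution from Theorem~\ref{thm:devriendt-characterization}, with strictness coming from $\lambda\neq0$ together with the strict vertex bounds in the RP case, and from $\mu(T_1)>0$ in the RN case.
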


\begin{proof}
Suppose first that $G$ is RP. By
Theorem~\ref{thm:devriendt-characterization}, there is a positive
distribution $\mu$ on $\cT(G)$ such that
$\E_\mu[\deg_T(v)]<2$ for every $v\in V$. Since $\lambda$ is nonzero and
nonnegative,
\[
 \E_\mu\left[\sum_{v\in V}\lambda(v)\deg_T(v)\right]
 =\sum_{v\in V}\lambda(v)\E_\mu[\deg_T(v)]
 <2\sum_{v\in V}\lambda(v),
\]
contradicting the assumed lower bound.

Now suppose that the inequality is strict for some spanning tree $T_1$ and
that $G$ is RN. Let $\mu$ be a positive distribution on $\cT(G)$ such that
$\E_\mu[\deg_T(v)]\le2$ for every $v$. Since $\mu(T_1)>0$, averaging the
assumed inequalities gives
\[
 \E_\mu\left[\sum_{v\in V}\lambda(v)\deg_T(v)\right]
 >2\sum_{v\in V}\lambda(v).
\]
On the other hand,
\[
 \E_\mu\left[\sum_{v\in V}\lambda(v)\deg_T(v)\right]
 =\sum_{v\in V}\lambda(v)\E_\mu[\deg_T(v)]
 \le2\sum_{v\in V}\lambda(v),
\]
a contradiction.
\end{proof}

For an edge $uv\in E(G)$, define
$w_\lambda(uv):=\lambda(u)+\lambda(v)$. Then
$$\sum_{v\in V}\lambda(v)\deg_T(v)
=\sum_{uv\in E(T)}w_\lambda(uv).$$
Thus Lemma~\ref{lem:weighted-obstruction} can be applied by giving a lower
bound on the weight of every spanning tree.

Recall that $H_m=G_m(1,\ldots,1)$ is the one-subdivision case of
the family from~\cite{AgrahariBibbyBorosGarciaHeidercheidtWang2026}: it is
obtained from the complete graph on $\{a,b_1,\ldots,b_m\}$ by subdividing
each edge $ab_i$ once, with subdivision vertex $x_i$. We set
$J_{m,\ell}=K_\ell\vee H_m$. Let $B:=\{b_1,\ldots,b_m\}$ and
$U:=V(K_\ell)$.
Thus $B$ induces a clique, each $a x_i b_i$ is a path, and every vertex of
$U$ is adjacent to every vertex of $H_m$; see
Figure~\ref{fig:Jml}.

\begin{figure}[ht]
\centering
\begin{tikzpicture}[
    scale=1,
    every node/.style={font=\small},
    vtx/.style={circle,draw,fill=white,minimum size=6mm,inner sep=0pt},
    setbox/.style={draw,rounded corners,inner sep=6pt},
    joinline/.style={densely dashed,thick}
]
    \node[vtx] (a) at (0,0) {$a$};

    \node[vtx] (x1) at (2.1,1.45) {$x_1$};
    \node[vtx] (x2) at (2.1,0.45) {$x_2$};
    \node (xdots) at (2.1,-0.45) {$\vdots$};
    \node[vtx] (xm) at (2.1,-1.45) {$x_m$};

    \node[vtx] (b1) at (4.2,1.45) {$b_1$};
    \node[vtx] (b2) at (4.2,0.45) {$b_2$};
    \node (bdots) at (4.2,-0.45) {$\vdots$};
    \node[vtx] (bm) at (4.2,-1.45) {$b_m$};

    \draw (a)--(x1)--(b1);
    \draw (a)--(x2)--(b2);
    \draw (a)--(xm)--(bm);

    \draw (b1)--(b2);
    \draw (b1) to[bend left=18] (bm);
    \draw (b2) to[bend left=10] (bm);
    \node[setbox,fit=(b1)(b2)(bdots)(bm),label=right:$B\cong K_m$] (Bbox) {};

    \node[vtx] (u1) at (-3.0,0.9) {$u_1$};
    \node (udots) at (-3.0,0) {$\vdots$};
    \node[vtx] (ul) at (-3.0,-0.9) {$u_\ell$};
    \draw (u1)--(ul);
    \node[setbox,fit=(u1)(udots)(ul),label=left:$U\cong K_\ell$] (Ubox) {};

    \draw[joinline] (Ubox.east) -- node[above] {complete join} (a.west);
    \draw[joinline] (Ubox.east) -- (x2.west);
    \draw[joinline] (Ubox.east) -- (Bbox.west);
\end{tikzpicture}
\caption{The graph $J_{m,\ell}=K_\ell\vee H_m$. The indexed vertices
indicate the repeating pattern.}
\label{fig:Jml}
\end{figure}

We first compute the toughness of $J_{m,\ell}$.

\begin{proposition}\label{prop:J-toughness}
For all $m\ge2$ and $\ell\ge1$, $\tau(J_{m,\ell})=1+\frac{\ell}{m}$.
\end{proposition}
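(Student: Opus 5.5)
We prove the two inequalities $\tau(J_{m,\ell})\le 1+\ell/m$ and $\tau(J_{m,\ell})\ge 1+\ell/m$ separately.

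\textbf{Upper bound.} Take $S:=U\cup\{a\}\cup B$, so $|S|=\ell+1+m$. Then $J_{m,\ell}-S$ consists of the isolated vertices $x_1,\ldots,x_m$, so $c(J_{m,\ell}-S)=m\ge2$. This gives
\[
\tau(J_{m,\ell})\le\frac{\ell+m+1}{m}.
\]
That is slightly larger than the target, so a better cut is needed. Try $S:=U\cup B$ instead. Now $J_{m,\ell}-S$ is the star with center $a$ and leaves $x_1,\ldots,x_m$, which has only one component. Next try $S:=U\cup\{b_1,\ldots,b_m\}\setminus\{b_m\}$ together with $a$: the components are $x_1,\ldots,x_{m-1}$ and the path $x_mb_m$. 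This gives $m$ components with $|S|=\ell+m$, so
\[
\tau(J_{m,\ell})\le\frac{\ell+m}{m}=1+\frac{\ell}{m}.
\]

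\textbf{Lower bound.} Let $S$ be any cutset of $J_{m,\ell}$. Since every vertex of $U$ is adjacent to all other vertices, we must have $U\subseteq S$. Write $S=U\cup S'$, where $S'\subseteq V(H_m)$ disconnects $H_m$. It then suffices to show $c(H_m-S')\le|S'|$ whenever $c(H_m-S')\ge2$, because then
\[
\frac{\ell+|S'|}{c}\ge\frac{\ell+c}{c}=1+\frac{\ell}{c}\ge1+\frac{\ell}{m},
\]
using also $c\le m$. So two claims remain: $H_m$ is $1$-tough, and every cut of $H_m$ leaves at most $m$ components.

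\textbf{Counting components of $H_m-S'$.} Let $k:=|B\setminus S'|$. The surviving vertices of $B$ lie in at most one component, since $B$ is a clique. Every other component contains $a$ or some $x_i$. Consider the cases.
\begin{itemize}
\item If $a\notin S'$, then $a$ together with its surviving neighbours $x_i$, and every $b_i$ joined to such an $x_i$, forms one component. The remaining components are the $B$-clique component and isolated vertices $x_i$; but $x_i$ can be isolated only when $a\in S'$, which is excluded here. Hence there are at most $2$ components, and at least one vertex is deleted: either some $x_i$, or the whole of $B$ (in which case the star around $a$ remains connected). So the bound $c\le|S'|$ holds in this case, and the small subcases can be checked directly.
\item If $a\in S'$, the components are $B\setminus S'$ together with the $x_i$ that have $b_i\notin S'$ (one component if $k\ge1$), plus one isolated $x_i$ for each $i$ with $b_i\in S'$ and $x_i\notin S'$. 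Hence
\[
c\le[k\ge1]+(m-k)\le1+|S'\cap B|\le|S'|,
\]
since $a\in S'$. Also $c\le m$ in all cases, because when $k=0$ the count is at most $m$, and when $k\ge1$ it is at most $1+(m-k)\le m$.
\end{itemize}

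The main obstacle is this case analysis for $H_m$, in particular ensuring that deleting vertices $x_i$ never produces extra components; careful bookkeeping of which $x_i$ remain attached to $a$ or to $B$ handles it. Combining the upper and lower bounds yields $\tau(J_{m,\ell})=1+\ell/m$.
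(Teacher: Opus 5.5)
Your proof follows the paper's argument. You use the same separating set $U\cup\{a,b_1,\dots,b_{m-1}\}$ for the upper bound, the same observation that $U\subseteq S$, and the same split according to whether $a$ is deleted. Your $a\in S'$ case is the paper's count and gives both $c\le m$ and $c\le|S'|$. The two abandoned trial sets in the upper bound should simply be removed.

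There is one gap: you have not established $c\le|S'|$ when $a\notin S'$.

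\begin{itemize}
\item There you correctly show $c\le 2$, so a cutset has $c=2$, and you need $|S'|\ge 2$.
\item Your remark that ``at least one vertex is deleted'' only gives $|S'|\ge1$.
\item Your parenthetical about deleting all of $B$ describes a case where $H_m-S'$ is a connected star, so it is not a cutset at all.
\item The phrase ``the small subcases can be checked directly'' is therefore carrying the whole claim.
\end{itemize}

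The paper closes this as follows. Suppose $x_i,b_i\notin S'$ for some $i$. The component of $a$ contains every surviving $x_j$, and the path $ax_ib_i$ joins it to the clique on $B\setminus S'$, so $H_m-S'$ would be connected. Hence every pair $\{x_i,b_i\}$ meets $S'$, which gives $|S'|\ge m\ge2$. Then $(\ell+|S'|)/2\ge(\ell+m)/2\ge(\ell+m)/m$. Equivalently, $H_m$ is a subdivision of the $2$-connected graph $K_{m+1}$, so no single vertex disconnects it. With that step added, your proof is complete.
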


\begin{proof}
Let $S_0:=U\cup\{a,b_1,\ldots,b_{m-1}\}$. Then $|S_0|=\ell+m$, and
$J_{m,\ell}-S_0$ has the isolated vertices
$x_1,\ldots,x_{m-1}$ and the component induced by $\{x_m,b_m\}$.
Thus $c(J_{m,\ell}-S_0)=m$, and hence
\[
 \tau(J_{m,\ell})\le\frac{\ell+m}{m}
 =1+\frac{\ell}{m}.
\]

For the reverse inequality, let $S\subseteq V(J_{m,\ell})$ satisfy
$c(J_{m,\ell}-S)>1$. If some vertex of $U$ does not belong to $S$, then
it is adjacent to every other vertex of $J_{m,\ell}-S$, a contradiction.
Therefore $U\subseteq S$. Let $R:=S\setminus U$ and
$c:=c(H_m-R)=c(J_{m,\ell}-S)$.

Suppose first that $a\in R$. If $B\setminus R\neq\emptyset$, then
$B\setminus R$ is contained in one component of $H_m-R$, and every other
component is an isolated vertex $x_i$ with $b_i\in R$. Therefore
$c\le1+|R\cap B|\le m$. Moreover, $R$ contains $a$ and a distinct vertex
$b_i$ for each such isolated vertex, so $|R|\ge c$. If $B\subseteq R$,
then $H_m-R$ is an independent set on at most $m$ vertices, while
$|R|\ge m+1$. Thus, in either case, $c\le m$ and $|R|\ge c$. Hence
\[
 \frac{|S|}{c}
 =\frac{\ell+|R|}{c}
 \ge1+\frac{\ell}{c}
 \ge1+\frac{\ell}{m}.
\]

Now suppose that $a\notin R$. Every vertex $x_i\notin R$ lies in the
component containing $a$. If $x_i,b_i\notin R$ for some $i\in[m]$, then
$b_i$, and consequently every vertex of $B\setminus R$, also lies in this
component. It follows that $H_m-R$ is connected, a contradiction. Thus
\[
 \{x_i,b_i\}\cap R\neq\emptyset
 \qquad\text{for every }i\in[m],
\]
and so $|R|\ge m$.

Also, $B\setminus R\neq\emptyset$, since otherwise $H_m-R$ would be
connected. For every $b_i\in B\setminus R$, we have $x_i\in R$.
Consequently, $B\setminus R$ induces one component, and all remaining
vertices of $H_m-R$ lie in the component containing $a$. Thus $c=2$, and
\[
 \frac{|S|}{c}
 =\frac{\ell+|R|}{2}
 \ge\frac{\ell+m}{2}
 \ge\frac{\ell+m}{m}
 =1+\frac{\ell}{m},
\]
where the second inequality follows from $m\ge2$. Together with the upper
bound, this proves the proposition.
\end{proof}

We next show that the graphs in the relevant parameter range are not RN.

\begin{proposition}\label{prop:J-nonRN}
If $m\ge2\ell+4$, then $J_{m,\ell}$ is not RN.
\end{proposition}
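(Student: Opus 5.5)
The plan is to apply Lemma~\ref{lem:weighted-obstruction} with a weight function $\lambda$ supported on $\{a\}\cup X\cup U$, where $X:=\{x_1,\ldots,x_m\}$, and vanishing on the clique $B$. Since $B$ is where spanning trees can "hide" edges cheaply, putting no weight there is natural. By symmetry of $J_{m,\ell}$, I would take $\lambda(a)=\alpha$, $\lambda(x_i)=\theta$ for all $i$, $\lambda(u)=\gamma$ for all $u\in U$, and $\lambda(b_i)=0$.

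Using the identity $\sum_v\lambda(v)\deg_T(v)=\sum_{uv\in E(T)}w_\lambda(uv)$, the goal becomes a lower bound on the $w_\lambda$-weight of every spanning tree. The target is $2(\alpha+m\theta+\ell\gamma)$, with strict inequality for at least one tree. The edge weights are:
\begin{itemize}
\item $w(ax_i)=\alpha+\theta$ and $w(x_ib_i)=\theta$;
\item $w(x_iu)=\theta+\gamma$, $w(au)=\alpha+\gamma$, $w(uu')=2\gamma$ and $w(ub_i)=\gamma$;
\item $w(b_ib_j)=0$.
\end{itemize}
I would first try $\theta=1$ and $\alpha=\gamma=1$. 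If the inequality fails there, I would tune $\alpha,\gamma$ by solving the small LP that these counting constraints produce.

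The key counting step is the tree identity: for $S\subseteq V$,
\[
\sum_{v\in S}\deg_T(v)=(n-1)+e_T(S)-e_T(V\setminus S),
\]
together with structural bounds on $T$. Take $S=\{a\}\cup X\cup U$, so that $V\setminus S=B$ and $e_T(B)\le m-1$. Moreover, $X\cup\{a\}$ is independent in $H_m$, and its only neighbours are $B$ and $U$. Hence every $x_i$ is a leaf attached to $a$, $b_i$ or $U$, or else a degree-$\ge 2$ connector.

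I would split the $x_i$ according to their degree in $T$. Each $x_i$ of degree $1$ is either attached to $a$ or $U$, contributing edges of weight at least $\theta+\min(\alpha,\gamma)$, or hangs off $b_i$. On the other side, $a$ must reach the rest of the tree through some $x_i$ or some $u\in U$. The vertices of $U$ can "absorb" at most about $2\ell$ of the required connections cheaply; this is where I expect the hypothesis $m\ge 2\ell+4$ to enter. After $U$ has used up its budget, at least a constant number of additional $x_i$'s are forced to have degree $2$, or to be attached to $a$, which pushes the total weight above $2\sum\lambda$.

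The main obstacle is choosing $(\alpha,\gamma)$ so that a single clean inequality covers all trees. The difficulty comes from trees that route many $x_i$ through $U$ and thereby keep $\deg_T(a)$ small. My first attempt would be a Hall/charging argument: charge each edge of $T$ incident to $S$ to its endpoints in $S$, and show that each $u\in U$ receives excess charge at most $2\gamma$ while the $m$ vertices $x_i$ generate a deficit of order $m-2\ell$.

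Strictness is easier: exhibit an explicit tree in which every $x_i$ has degree $2$, namely the star at $a$ through all paths $ax_ib_i$ with $U$ attached to $b_1$. Its weight exceeds the bound by a positive amount when $m\ge 2\ell+4$. With that tree in hand, Lemma~\ref{lem:weighted-obstruction} yields that $J_{m,\ell}$ is not RN.
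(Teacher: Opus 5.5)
There is a genuine gap: no weight function $\lambda$ that vanishes on $B$ can satisfy the hypothesis of Lemma~\ref{lem:weighted-obstruction}, whatever the values of $(\alpha,\theta,\gamma)$. Take the spanning tree $T^*$ consisting of
\begin{itemize}
\item a spanning tree of the clique $B$,
\item all edges $x_ib_i$,
\item the edges $ub_1$ for $u\in U$,
\item the edge $ax_1$.
\end{itemize}
In $T^*$, every vertex of $S=\{a\}\cup X\cup U$ is a leaf except $x_1$, which has degree $2$. So if $\lambda|_B=0$, then $\sum_v\lambda(v)\deg_{T^*}(v)=\sum_v\lambda(v)+\lambda(x_1)$, and the required inequality becomes $\lambda(x_1)\ge\sum_v\lambda(v)$.

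The same tree with $ax_2$ in place of $ax_1$ gives $\lambda(x_2)\ge\sum_v\lambda(v)$. Together these force $\lambda\equiv0$. In your symmetric ansatz the condition reads $0\ge\alpha+(m-1)\theta+\ell\gamma$. So no LP tuning can help, and the hypothesis $m\ge2\ell+4$ never enters.

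Your own identity already shows the problem. With $e_T(B)=m-1$ and $e_T(S)=1$, it yields only $\sum_{v\in S}\deg_T(v)\ge|S|+1$, far from $2|S|$. Your heuristic is backwards: the $x_i$ can all be leaves hanging off $B$, so they should get weight $0$. The weight should instead sit on $B$, $a$ and $U$, so that every edge other than the matching $\{x_ib_i\}$ becomes expensive. The proposed charging or Hall argument cannot repair this, because the inequality it aims for is false.

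The missing idea is the paper's choice of weights: $\lambda(a)=\lambda(u)=2$ for $u\in U$, $\lambda(b_i)=1$, and $\lambda(x_i)=0$. Then $\sum\lambda=m+2\ell+2$, and $w_\lambda(e)\ge2$ for every edge except the $m$ edges $x_ib_i$, which have weight $1$.

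Every spanning tree has $2m+\ell$ edges, at most $m$ of weight $1$. So its weight is at least $m+2(m+\ell)=3m+2\ell$, and $3m+2\ell-2\sum\lambda=m-2\ell-4\ge0$. This inequality is strict for every tree when $m>2\ell+4$.

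When $m=2\ell+4$, take the extremal tree: a spanning tree of $B$, all $x_ib_i$, $ax_1$, and $ux_1$ for $u\in U$. Replacing a leaf edge $ux_1$ by $ub_1$ gains weight $1$, which gives the strict tree the lemma requires.
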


\begin{proof}
Define $\lambda:V(J_{m,\ell})\to\mathbb R_{\ge0}$ by
$\lambda(a)=2$, $\lambda(u)=2$ for $u\in U$, $\lambda(b_i)=1$, and
$\lambda(x_i)=0$ for $i\in[m]$. Then
\[
 \sum_{v\in V(J_{m,\ell})}\lambda(v)=m+2\ell+2.
\]
The only edges of weight $1$ under
$w_\lambda(uv)=\lambda(u)+\lambda(v)$ are the $m$ edges $x_i b_i$.
Every other edge has weight at least $2$.

The graph $J_{m,\ell}$ has $2m+\ell+1$ vertices, so every spanning tree has
$2m+\ell$ edges. At most $m$ of these edges have weight $1$. Therefore every
spanning tree $T$ satisfies
\begin{equation}
\begin{aligned}
   \sum_{v\in V(J_{m,\ell})}\lambda(v)\deg_T(v)
   &=\sum_{e\in E(T)}w_\lambda(e)\\
   &\ge m+2\big((2m+\ell)-m\big)\\
   &=3m+2\ell.
\end{aligned}
\end{equation}
The lower bound is attained by the spanning tree $T_0$ consisting of all
edges $x_i b_i$, a spanning tree of $B$, the edge $a x_1$, and the edges
$u x_1$ for $u\in U$.
Since
\[
 (3m+2\ell)
 -2\sum_{v\in V(J_{m,\ell})}\lambda(v)
 =m-2\ell-4,
\]
if $m>2\ell+4$, then every spanning tree $T$ satisfies
\[
 \sum_{v\in V(J_{m,\ell})}\lambda(v)\deg_T(v)
 >2\sum_{v\in V(J_{m,\ell})}\lambda(v).
\]
Lemma~\ref{lem:weighted-obstruction} therefore implies that
$J_{m,\ell}$ is not RN.

It remains to consider $m=2\ell+4$. In this case every spanning tree $T$
satisfies
\[
 \sum_{v\in V(J_{m,\ell})}\lambda(v)\deg_T(v)
 \ge2\sum_{v\in V(J_{m,\ell})}\lambda(v),
\]
and $T_0$ satisfies equality. Choose $u\in U$. In $T_0$, the vertex $u$ is
a leaf incident with $u x_1$. Let $T_1:=T_0-u x_1+u b_1$. Then $T_1$ is a
spanning tree. Since $w_\lambda(u x_1)=2$ and
$w_\lambda(u b_1)=3$, we have
\[
 \sum_{v\in V(J_{m,\ell})}\lambda(v)\deg_{T_1}(v)
 =2\sum_{v\in V(J_{m,\ell})}\lambda(v)+1.
\]
Thus the hypotheses of Lemma~\ref{lem:weighted-obstruction} hold, and
$J_{m,\ell}$ is not RN.
\end{proof}

We now prove Theorem~\ref{thm:almost-3/2}, which we restate here for
convenience.

\almostThreeHalves*

\begin{proof}
For $\ell\ge1$, let $G_\ell:=J_{2\ell+4,\ell}$. By
Proposition~\ref{prop:J-nonRN}, the graph $G_\ell$ is not RN. By
Proposition~\ref{prop:J-toughness},
$\tau(G_\ell)=1+\frac{\ell}{2\ell+4}
=\frac32-\frac1{\ell+2}$.
The result follows by choosing $\ell$ such that $1/(\ell+2)<\varepsilon$.
\end{proof}

\section*{Acknowledgements}
The author used OpenAI's ChatGPT 5.6 Pro extensively in developing this
work. In response to prompts, suggested mathematical formulations, and prior successful approaches and constructions supplied by the author and several rounds of discussions and corrections, ChatGPT 5.6 Pro generated the initial proofs of
Theorems~\ref{thm:transfer} and
\ref{thm:fractional-Hamiltonian-RP}, and proposed the family
$J_{m,\ell}$ used to prove Theorem~\ref{thm:almost-3/2} as an extension of
the construction of Agrahari et
al.~\cite{AgrahariBibbyBorosGarciaHeidercheidtWang2026}. It also assisted
with the organization and exposition of the manuscript. The author
independently verified, corrected, and finalized all arguments and takes full
responsibility for the mathematical content. The author also thanks Xiaonan Liu for helpful discussions.

\end{document}